\definecolor{marin}{rgb}   {0.,   0.3,   0.7}
\definecolor{rouge}{rgb}   {0.8,   0.,   0.}
\definecolor{sepia}{rgb}   {0.8,   0.5,   0.}
\newcommand{\blue}{\color{black}}
\newcommand{\black}{\color{black}}
\theoremstyle{plain}
\newtheorem{theorem}{Theorem}[section]
\newtheorem{lemma}[theorem]{Lemma}
\newtheorem{corollary}[theorem]{Corollary}
 \theoremstyle{remark}
\newtheorem{remark}[theorem]{Remark}
\newtheorem{example}[theorem]{Example}
\newcommand {\aplt} {\ {\raise-.5ex\hbox{$\buildrel<\over\sim$}}\ }
\newcommand {\gplt} {\ {\raise-.5ex\hbox{$\buildrel>\over\sim$}}\ }
\newcommand{\p}{}
\newcommand{\m}{-}
\newcommand{\pp}{+}
\def\@makefnmark{\hbox{$\m@th^{\@thefnmark}$}}
\author{Alexander Ostermann}
\address{Department of Mathematics, University of Innsbruck,
Technikerstr.~13, 6020 Innsbruck, Austria}
\email{alexander.ostermann@uibk.ac.at}
\author{Katharina Schratz}
\address{Fakult\"{a}t f\"{u}r Mathematik, Karlsruhe Institute of Technology,
Englerstr.~2, 76131 Karlsruhe, Germany}
\email{katharina.schratz@kit.edu}
\begin{abstract}
We introduce low regularity exponential-type integrators for nonlinear Schr\"odinger equations for which first-order convergence only requires the boundedness of one additional derivative of the solution. More precisely, we will prove first-order convergence in $H^r$ for solutions in $H^{r+1}$ ($r>d/2$) of the derived schemes. This allows us lower regularity assumptions on the data than for instance required for classical splitting or exponential integration schemes. For one dimensional quadratic Schr\"odinger equations we can even prove first-order convergence without any loss of regularity. Numerical experiments underline the favorable error behavior of the newly introduced exponential-type integrators for low regularity solutions compared to  classical splitting and exponential integration schemes.
\end{abstract}
\keywords{Nonlinear Schr\"odinger equations -- exponential-type time integrator -- low regularity -- convergence}
\begin{document}

\title[Low regularity exponential-type integrators for Schr\"odinger equations]{ Low regularity exponential-type integrators for semilinear Schr\"odinger equations}

\maketitle
Semilinear Schr\"odinger equations, in particular those of type
\begin{equation}\label{nlsO}
i\partial_t u = \m  \Delta u + \mu \vert u \vert^{2p}u, \quad p \in \mathbb{N}
\end{equation}
with $\mu \in \mathbb{R}$ are nowadays extensively studied numerically. In this context, splitting methods (where the right-hand side is split into the kinetic and nonlinear part, respectively) as well as exponential integrators (including Lawson-type Runge--Kutta methods) contribute particularly attractive classes of integration schemes. For an extensive overview on splitting and exponential integration methods we refer to \cite{HLW,HochOst10,HLRS10,McLacQ02}, and for their rigorous convergence analysis in the context of semilinear Schr\"odinger equations we refer to \cite{BeDe02,CanG15,CCO08,CoGa12,Duj09,Faou12,Gau11,Lubich08,Ta12} and the references therein. However, within the construction of all these numerical methods the \emph{stiff part} (i.e., the terms involving the differential operator $\Delta$) is approximated in one way or another which generally requires the boundedness of two additional spatial derivatives of the exact solution. In particular, convergence of a certain order only holds under sufficient additional regularity assumptions on the solution.
 In the following we will illustrate the local error behavior of classical splitting and exponential integration methods and the thereby introduced smoothness requirements.\\

 \emph{Splitting schemes.}  The  Strang splitting for the  cubic  Schr\"odinger equation (i.e., $p=1$)
\begin{equation*}\label{split:cub}
\begin{aligned}
& u^{n+1/2}_{-} = \mathrm{e}^{ \p i \frac{\tau}{2} \Delta} u^n,\\
& u^{n+1/2}_{+} = \mathrm{e}^{-i \tau \mu\big\vert u^{n+1/2}_{-}\big\vert^2}  u^{n+1/2}_{-},
\\
& u^{n+1} = \mathrm{e}^{ \p i \frac{\tau}{2} \Delta}u^{n+1/2}_{+},
 \end{aligned}
\end{equation*}
where the right-hand side is split into the kinetic $T(u) = \p i \Delta u$ and  nonlinear part  $ V(u) = - i \mu\vert u\vert^2 u$ is rigorously analyzed in \cite{Lubich08}. In particular, its second-order convergence in $H^r$ for solutions in $H^{r+4}$ and its first-order convergence in $H^r$ for solutions in  $H^{r+2}$  holds for all $r \geq 0$, see \cite{Lubich08} and \cite{ESS16}, respectively. The result follows from the fact that the local error of the splitting scheme can be expressed through the double Lie commutator $[T,[T, V]](u)$ and the Lie commutator $[T,V](u)$ for second and first-order methods, respectively. The latter reads
$$
\frac{1}{2\mu} [T,V](u) =  \left(\nabla u \cdot \nabla \overline{u}\right) u + \left(\nabla u \overline{u} \right)\cdot \nabla u + \left(u \nabla \overline{u}\right)\cdot \nabla u + \left(u \overline{\Delta u} \right)u,
$$
see \cite[Section 4.2]{Lubich08}. Due to the appearance of $ \overline{\Delta u}$ in the local error, the boundedness of at least two additional derivatives of the exact solution is required. Based on  \cite{Lubich08}, fractional error estimates for the Strang splitting were established in \cite{ESS16} which require the boundedness of $2+2\gamma$ additional derivatives for convergence of order $1+\gamma$, with $0 < \gamma <1$. Furthermore, in \cite{Ignat11} first-order convergence of a filtered Lie splitting method for Schr\"odinger equations of type~\eqref{nlsO} with $1 \leq 2p < 4/d$ was shown  in $L^2(\mathbb{R}^d)$ for solutions in $H^2(\mathbb{R}^d)$.\\

\emph{Classical exponential integrators.} The classical first-order exponential integrator in the cubic case reads
\begin{equation}\label{exp:cub}
u^{n+1} = \mathrm{e}^{\p i \tau \Delta} u^n - i \mu \tau \varphi_1(\p i \tau \Delta) \Big( \vert u^n\vert^2 u^n\Big)\quad \text{with}\quad \varphi_1(z)  = \frac{\mathrm{e}^{z} - 1}{z}.
\end{equation}
Its construction is based on Duhamel's formula
\begin{equation*}
u(t_n+\tau) = \mathrm{e}^{ \p i \tau \Delta} u(t_n) - i \mu \int_0^\tau \mathrm{e}^{\p i (\tau-s)\Delta} \left \vert u(t_n+s)\right\vert^{2p}u(t_n+s) \mathrm{d}s
\end{equation*}
by applying the approximation
\begin{equation}
\label{uuu}
u(t_n+s) \approx u(t_n)
\end{equation}
in the integral terms and solving the remaining integral over the free Schr\"odinger group $\mathrm{e}^{\p i (\tau-s)\Delta}$ exactly. Due to the approximation \eqref{uuu} the above scheme introduces a local error of the form
\begin{equation*}
\begin{aligned}
\partial_t\bigl( i\vert u\vert^2 u \bigr) = i \overline{\partial_t u} u^2 + 2 i \vert u\vert^2 \partial_t u = \mu \vert u \vert^4 u \m 2  \vert u \vert^2\Delta u
\pp  u^2 \overline{\Delta u},
\end{aligned}
\end{equation*}
see \cite{HochOst10}. Hence, first-order convergence also requires the boundedness of at least two derivatives of the exact solution due to the appearance of $\Delta u$ and its complex conjugate counterpart.\\

The main novelty in this work lies in the development and analysis of a \emph{first-order exponential-type integrator} for Schr\"odinger equations of type \eqref{nlsO}
\begin{equation}\label{scheme}
u^{n+1} =  \mathrm{e}^{\p i \tau \Delta}\left[ u^n - i \mu \tau \big( u^n\big)^{p+1}\Big(\varphi_1(\m 2i \tau \Delta) \big( \overline{u^n}\big)^{p}\Big)\right] \quad \text{with} \quad \varphi_1(z) = \frac{\mathrm{e}^{z} - 1}{z}
\end{equation}
which \emph{only requires the boundedness of one additional derivative of the exact solution}. The construction is based on Duhamel's formula looking at the \emph{twisted} variable $v(t) = \mathrm{e}^{\m it \Delta } u(t)$ and treating the dominant term triggered by $\overline{u}^{2p}$ in an exact way. This idea of twisting the variable is widely used in the analysis of partial differential equations in low regularity spaces (see for instance \cite{Bour93}) and also well known in the context of numerical analysis, see \cite{Law67} for the introduction of Lawson-type Runge--Kutta methods. However, instead of approximating the appearing integrals with a Runge--Kutta method (see for instance \cite{KT05}) we \emph{integrate the dominant second-order stiff parts exactly}.

A similar approach has been successfully applied to the one dimensional KdV equation, see~\cite{HoS16}. However, due to the Burgers-type nonlinearity,  additional smoothness assumptions on the exact solution are necessary. More precisely, first-order convergence in $H^1$ is only guaranteed for solutions in $H^{3}$ in the KdV setting. The introduced exponential integrators for nonlinear Schr\"odinger equations are in contrast \emph{first-order convergent in $H^r$ for solutions in $H^{r+1}$} (with $r>d/2$). Furthermore, we show that our approach yields methods for quadratic nonlinearities of type $u^2$ and $\vert u \vert^2$ in one dimension which are convergent in $H^r$ for solutions in $H^r$, i.e., no additional smoothness of the solution is required. Such generous convergence results for nonlinear Schr\"odinger equations are up to our knowledge not yet know in the numerical analysis literature.

For practical implementation issues we impose periodic boundary conditions and refer to~\cite{Bour93} for local wellposedness (LWP) results of nonlinear Schr\"odinger equations in $H^r(\mathbb{T}^d)$. Note that in the case of a cubic nonlinearity we obtain LWP for $r > 1/2$ if $d \leq 3$. In the case of a quadratic nonlinearity of type $u^2$ (or $\overline{u}^2$) LWP can be pushed down to $r > - 1/2$ on the one dimensional torus, see~\cite{KPV96}. In the following let $r>d/2$. We denote by $\Vert \cdot \Vert_r$ the standard $H^r= H^r(\mathbb{T}^d)$ Sobolev norm.  In particular, we  exploit the well-known bilinear estimate
\begin{equation}\label{bil}
\Vert f g \Vert_r \leq c_{r,d} \Vert f \Vert_r \Vert g \Vert_r
\end{equation}
which holds for some constant $c_{r,d}>0$. Furthermore, we denote by $c$ a generic constant which may depend on $r$, $d$, $p$ and $\gamma$.
\section{Low regularity exponential integrators}
We consider nonlinear Schr\"odinger equations (NLS) of type
\begin{equation}\label{nls}
\begin{aligned}
& i \partial_t u(t,x) =\m \Delta  u(t,x) + \mu \vert u(t,x) \vert^{2p}u(t,x),\quad (t,x) \in \mathbb{R} \times \mathbb{T}^d, \quad p \in \mathbb{N}
\end{aligned}
\end{equation}
with $\mu \in \mathbb{R}$ and where we have set
$
\Delta = \sum_{k=1}^d\partial_{x_k}^2.
$
Furthermore, we employ the so-called $\varphi_1$ function defined as
$$
\varphi_1(z) = \frac{\mathrm{e}^{z} - 1}{z}.
$$
In the following we derive exponential-type integrators for Schr\"odinger equations of type \eqref{nls} based on iterating Duhamel's formula in the twisted variable $v(t) = \mathrm{e}^{\m i t\Delta } u(t)$. Note that the twisted variable  satisfies
\begin{equation}\label{nichtegal}
i \partial_t v(t) = \mu \mathrm{e}^{\m i t \Delta} \Big[ \vert  \mathrm{e}^{\p i t \Delta} v(t)\vert^{2p}  \mathrm{e}^{\p i t \Delta} v(t)\Big]
\end{equation}
with mild solution given by
\begin{equation}\label{vsol}
\begin{aligned}
v(t_n+\tau) = v(t_n) - i \mu \int_0^\tau \mathrm{e}^{\m i (t_n+s) \Delta} \Big[ \vert  \mathrm{e}^{\p i (t_n+s) \Delta} v(t_n+s)\vert^{2p}  \mathrm{e}^{\p i (t_n+s) \Delta} v(t_n+s)\Big]\mathrm{d}s.
\end{aligned}
\end{equation}
In order to derive our numerical scheme we proceed as follows. As $\mathrm{e}^{i t \Delta}$ is a linear isometry on $H^r$ for all $t \in \mathbb{R}$ and the bilinear estimate \eqref{bil} holds we obtain that
\begin{equation}\label{locapp}
\Vert v(t_n+s) - v(t_n)\Vert_r \leq |\mu| \int_0^s\Vert v(t_n+\xi)\Vert_r^{2p+1} \mathrm{d} \xi \leq s \vert \mu \vert  \sup_{0 \leq \xi \leq s} \Vert v(t_n+\xi)\Vert_r^{2p+1}, \quad r > d/2.
\end{equation}
In  this sense we have for $|s| \leq \tau$
\begin{equation}\label{locapp2}
v(t_n+s) \approx v(t_n)
\end{equation}
for a small time step $\tau$.
\begin{remark} Note that the twisted variable allows the formal expansion
\[
v(t_n+s) - v(t_n) = \mathcal{O}\left(s |v|^{2p}v\right)
\]
such that the approximation \eqref{locapp2} holds without additional regularity assumptions (see \eqref{locapp} for the rigorous estimate). The classical exponential integrator \eqref{exp:cub},  in contrast, is based on the approximation \eqref{uuu} for the original solution $u$. This approximation error is small only under additional smoothness assumptions, since
\[
u(t_n+s)  - u(t_n) = \left(\mathrm{e}^{i s \Delta}-1\right) u(t_n) + \mathcal{O}\left(s |u|^{2p}u\right).
\]
\end{remark}
Plugging \eqref{locapp2} into \eqref{vsol} yields  the approximation
\begin{equation}\label{vsolap1}
\begin{aligned}
v(t_n+\tau) &\approx v(t_n) - i \mu \int_0^\tau \mathrm{e}^{\m i (t_n+s) \Delta} \Big[ \vert  \mathrm{e}^{\p i (t_n+s) \Delta} v(t_n)\vert^{2p}  \mathrm{e}^{\p i (t_n+s) \Delta} v(t_n)\Big]\mathrm{d}s
\end{aligned}
\end{equation}
which is the basis of our numerical scheme.

Hence, we are left with deriving a numerical approximation to  the integral
$$
I_p^\tau(w,t_n) :=  \int_0^\tau \mathrm{e}^{\m i (t_n+s) \Delta} \Big[ \vert  \mathrm{e}^{\p i (t_n+s) \Delta} w\vert^{2p}  \mathrm{e}^{\p i (t_n+s) \Delta} w\Big]\mathrm{d}s.
$$
To illustrate the idea we will first consider the cubic case $p = 1$ in  Section~\ref{sec:cub} below. In Section~\ref{sec:p} we will deal with general nonlinearities $p \geq 1$. The special case of quadratic nonlinearities of type $u^2$ and $\vert u \vert^2$, respectively, will be treated in Section~\ref{sec:qnls}.

\subsection{Cubic nonlinearities \textbf{\textit{p}}${}={}$1}\label{sec:cub}
 For notational simplicity we first illustrate the idea in one dimension. In Section \ref{sec:cubd} we will give the generalization to arbitrary dimensions $d \geq 1$.
 \subsubsection{Cubic nonlinearities $p=1$ in dimension $d=1$} Let $f \in L^2(\mathbb{T})$. Then we will denote its Fourier expansion by $
f(x) = \sum_{k \in \mathbb{Z}} \hat{f}_k \mathrm{e}^{i k x}$. Furthermore, we  define a regularization of $\partial_x^{-1}$ through its action in Fourier space by
$$
(\partial_x^{-1})_k : =
\left\{
\begin{array}{ll}
(ik)^{-1} &\mbox{if} \quad k \neq 0\\
0 & \mbox{if}\quad k = 0
\end{array}
\right.,  \text{ i.e.,} \quad\partial_x^{-1} f(x) =\sum_{k\in \mathbb{Z}\setminus \{ 0\} } (ik)^{-1} \hat{f}_k \mathrm{e}^{ikx},
$$
and, by continuity,
\begin{equation}\label{conti}
\left(\frac{\mathrm{e}^{i t \partial_x^2} - 1}{i t \partial_x^2} \right)_{k\neq 0}  =
\frac{\mathrm{e}^{-i t k^2} - 1}{- i t k^2}, \qquad \left(\frac{\mathrm{e}^{i t \partial_x^2} - 1}{i t \partial_x^2} \right)_{k=0} = 1.
\end{equation}
In the case of a cubic nonlinearity (i.e.,  $p = 1$ in \eqref{nls}) in one spatial dimension the integral in~\eqref{vsolap1} can be expressed in terms of the Fourier expansion as follows
\begin{equation}\label{cubcal1}
\begin{aligned}
  I_1^\tau(w,t_n)  & =
 \int_0^\tau \mathrm{e}^{\m i (t_n+s) \partial_x^2} \Big[ \big( \mathrm{e}^{\m i (t_n+s) \partial_x^2} \overline{w}\big)  \big(\mathrm{e}^{\p i (t_n+s) \partial_x^2} w\big)^2\Big]\mathrm{d}s\\
 & =   \sum_{\ell \in \mathbb{Z}}  \sum_{\substack{k_1,k_2,k_3 \in \mathbb{Z}\\\ell = - k_1+k_2+k_3}} \int_0^\tau
 \mathrm{e}^{ i (t_n+s) \big[ \p \ell^2 \pp k_1^2 \m k_2^2 \m k_3^2\big] }\mathrm{d}s \,\overline{\hat{w}_{k_1}} \hat{w}_{k_2} \hat{w}_{k_3} \mathrm{e}^{i \ell x}\\
 & =    \sum_{\ell \in \mathbb{Z}}   \sum_{\substack{k_1,k_2,k_3 \in \mathbb{Z}\\\ell = - k_1+k_2+k_3}}\mathrm{e}^{ i t_n \big[ \p \ell^2 \pp k_1^2 \m k_2^2 \m k_3^2\big]}\overline{\hat{w}_{k_1}} \hat{w}_{k_2} \hat{w}_{k_3} \mathrm{e}^{i \ell x}  \int_0^\tau  \mathrm{e}^{ i s \big[ \p 2 k_1^2 \m 2k_1(k_2+k_3) \pp 2k_2 k_3\big] }\mathrm{d}s.
\end{aligned}
\end{equation}
Naturally, the next step would be the exact integration of the appearing integral
\begin{equation}\label{theintegral}
\int_0^\tau  \mathrm{e}^{ i s \big[ \p 2 k_1^2 \m 2k_1(k_2+k_3) \pp 2k_2 k_3\big]}\mathrm{d}s.
\end{equation}
However, the obtained convolution can not be converted into the physical space straightforwardly and therefore only yields a \emph{practical} scheme in dimension one as the iteration needs to be carried out fully in Fourier space. In order to obtain an \emph{efficient and practical} low regularity implementation we therefore only
$$
\text{treat the \emph{dominant} quadratic  term $\p 2 k_1^2$ exactly.}
$$
\blue For this purpose, we write the integrand in \eqref{theintegral} as
\begin{equation}\label{cubap2}
\begin{aligned}
 \mathrm{e}^{ i s \big[ \p 2 k_1^2 \m 2k_1(k_2+k_3) \pp 2k_2 k_3\big]} & =
 \mathrm{e}^{ \p 2 i s k_1^2}\left(1+ \vert \beta\vert^\gamma \,\frac{\mathrm{e}^{i\beta}-1}{\vert\beta\vert^\gamma} \right)
\end{aligned}
\end{equation}
with $\beta = s (\m 2k_1(k_2+k_3) \pp 2k_2 k_3\big)$ and some $0\le\gamma\le 1$\black. Inserting \eqref{cubap2} into \eqref{cubcal1} and integrating \blue the first term \black
$$
\int_0^\tau \mathrm{e}^{ 2 i s k_1^2}\mathrm{d} s = \tau \varphi_1(\p 2 i \tau k_1^2)
$$
yields that
\begin{equation}\label{cubcal2}
\begin{aligned}
 I_1^\tau(w,t_n)
& =\tau   \sum_{\ell \in \mathbb{Z}}   \sum_{\substack{k_1,k_2,k_3 \in \mathbb{Z}\\\ell = - k_1+k_2+k_3}}\mathrm{e}^{i t_n \big[ \p \ell^2 \pp k_1^2 \m k_2^2 \m k_3^2\big]} \varphi_1(\p 2 i \tau k_1^2)\overline{\hat{w}_{k_1}} \hat{w}_{k_2} \hat{w}_{k_3} \mathrm{e}^{i \ell x}+ \mathcal{R}^{\tau}_1(w,t_n)\\
& = \tau \mathrm{e}^{\m i t_n \partial_x^2} \Big[ \left( \mathrm{e}^{\p i t_n \partial_x^2} w\right)^2
\left( \varphi_1(\m 2 i \tau \partial_x^2)\mathrm{e}^{\m i t_n \partial_x^2} \overline{w}\right)
\Big]  +\mathcal{R}^{\tau}_1(w,t_n),
\end{aligned}
\end{equation}
\blue where \black
\begin{equation*}
\begin{aligned}
\Vert \mathcal{R}^{\tau}_1(w,t_n)\Vert_r^2
&= \sum_{\ell \in \mathbb{Z}} \left(1+\vert \ell \vert\right)^{2r} \Bigg \vert \sum_{\substack{k_1,k_2,k_3 \in \mathbb{Z}\\\ell = - k_1+k_2+k_3}}  \mathrm{e}^{ i t_n \big[ \p \ell^2 \pp k_1^2 \m k_2^2 \m k_3^2\big]}\big \vert 2 k_1(k_2+k_3) - 2k_2k_3\big\vert^\gamma\\
&\qquad \overline{\hat{w}_{k_1}} \hat{w}_{k_2} \hat{w}_{k_3}\mathrm{e}^{i \ell x}\int_0^\tau  \mathrm{e}^{ 2 i s k_1^2 }   \Bigg(\frac{ \mathrm{e}^{i s \big[ -2 k_1(k_2+k_3) + 2k_2k_3\big]}-1}{s^\gamma \vert 2 k_1(k_2+k_3) - 2k_2k_3\big\vert^\gamma}\Bigg)
 s^\gamma  \mathrm{d}s  \Bigg \vert^2.
\end{aligned}
\end{equation*}
\blue Since $|\beta|^{-\gamma}(\mathrm{e}^{i\beta}-1)$ is uniformly bounded for $\beta\in \mathbb{R}$ and $0\leq \gamma \leq 1$, the integral in the above formula is of the order $\tau^{1+\gamma}$. This shows that \black
\begin{equation}
\begin{aligned}\label{bremOf}
\Vert \mathcal{R}^{\tau}_1(w,t_n)\Vert_r^2  \le c\tau^{2+2\gamma}\sum_{\ell \in \mathbb{Z}} \left(1+\vert \ell \vert\right)^{2r} \Big ( \hspace{-2mm}\sum_{\substack{k_1,k_2,k_3 \in \mathbb{Z}\\\ell = - k_1+k_2+k_3}} \!\!\blue\big( \vert k_1 k_2 \vert +\vert k_1 k_3 \vert +\vert k_2 k_3\vert \big)^\gamma \black\,
\vert \hat{w}_{k_1} \vert \vert \hat{w}_{k_2}\vert \vert \hat{w}_{k_3}\vert \Big )^2\\
\blue \le c\tau^{2+2\gamma}\sum_{\ell \in \mathbb{Z}} \left(1+\vert \ell \vert\right)^{2r} \Big ( \hspace{-2mm} \sum_{\substack{k_1,k_2,k_3 \in \mathbb{Z}\\\ell = - k_1+k_2+k_3}} \!\!  \big (1+\vert k_1 \vert\big)^\gamma \big (1+\vert k_2 \vert\big)^\gamma \big (1+\vert k_3 \vert\big)^\gamma\,
\vert \hat{w}_{k_1}\vert \vert \hat{w}_{k_2}\vert \vert \hat{w}_{k_3}\vert \Big )^2
\end{aligned}
\end{equation}
for some constant $c>0$ \blue and $0 \leq \gamma \leq 1$.

Next we define the auxiliary  function
$g(x) = \sum_{k\in \mathbb{Z}} \hat{g}_k\mathrm{e}^{ikx}$ through its Fourier coefficients
\begin{align}\label{Fu}
\hat{g}_k = (1+\vert k \vert)^\gamma \vert \hat w_k\vert
\end{align}
which allows us to express the bound on the remainder given in \eqref{bremOf} as follows
\begin{align*}
\Vert \mathcal{R}^{\tau}_1(w,t_n)\Vert_r \leq c \tau^{1+\gamma} \Vert g^3\Vert_r.
\end{align*}
With the aid of the bilinear estimate \eqref{bil} we can thus conclude that for $r>1/2$
\begin{align*}
    \Vert \mathcal{R}^{\tau}_1(w,t_n)\Vert_r &  \leq c \tau^{1+\gamma} \Vert g\Vert_r^3  = c \tau^{1+\gamma} \left(\sum_{k \in \mathbb{Z}} (1+\vert k\vert)^{2(r+\gamma)} \vert \hat w_k\vert^2\right)^{3/2},
\end{align*}
where we have used the definition of the Fourier coefficients in \eqref{Fu}. Hence, we can conclude the following bound on the remainder
\black
\begin{equation}\label{rem1cub}
\Vert \mathcal{R}^{\tau}_1(w,t_n)\Vert_r \leq c \tau^{1+\gamma} \Vert w\Vert_{r+\gamma}^{3}\quad \text{ for } \quad 0 \leq \gamma \leq 1, \quad r > 1/2
\end{equation}
for some constant $c>0$.

Inserting the  dominant term of   \eqref{cubcal2} into \eqref{vsolap1} and twisting the solution back again, i.e., setting
$$u^n := \mathrm{e}^{\p i t_n \partial_x^2} v^n$$
yields the following exponential-type integration scheme for the cubic NLS
  \begin{equation}\label{cubscheme}
  u^{n+1} = \mathrm{e}^{\p i \tau \partial_x^2} \left[ u^n - i \tau \mu (u^n)^2\Big( \varphi_1(\m 2 i \tau \partial_x^2)\overline{u^n}\Big)\right].
  \end{equation}
 Next we generalize the above scheme to arbitrary dimensions $d \geq 1$.
   \subsubsection{Cubic nonlinearities $p=1$ in dimension $d\geq 1$} \label{sec:cubd}
   In the following we use the notation
   $$\mathbf{j} := (j_1,\ldots,j_d)\in \mathbb{Z}^d,\quad \mathbf{j}\cdot \mathbf{x}:= j_1 x_1 + \ldots + j_d x_d.$$
   In the case of a cubic nonlinearity $p = 1$ in dimension $d$ the integral in~\eqref{vsolap1} can be expressed in terms of the Fourier expansion  as follows
\begin{equation*}\label{cubcal1d}
\begin{aligned}
 I_1^\tau(w,t_n) &=
 \int_0^\tau \mathrm{e}^{\m i (t_n+s) \Delta} \Big[ \big( \mathrm{e}^{\m i (t_n+s)  \Delta} \overline{w}\big)  \big(\mathrm{e}^{\p i (t_n+s)  \Delta} w\big)^2\Big]\mathrm{d}s
 \\&=  \sum_{\mathbf{\Omega} \in \mathbb{Z}^d}
  \sum_{\substack{\mathbf{j},\mathbf{k},\mathbf{l}\in \mathbb{Z}^d\\\mathbf{\Omega} = -\mathbf{j}+\mathbf{k}+\mathbf{l}}} \int_0^\tau \mathrm{e}^{i (t_n+s) \big[\p \mathbf{\Omega}^2 \pp \mathbf{j}^2 \m \mathbf{k}^2 \m \mathbf{l}^2\big]} \mathrm{d}s\,\overline{\hat{w}_{\mathbf{j}}}\hat{w}_{\mathbf{k}}\hat{w}_{\mathbf{l}}\mathrm{e}^{i \,\mathbf{\Omega}\cdot \mathbf{x}}.
\end{aligned}
\end{equation*}
Note that we have
$$
\p \mathbf{\Omega}^2 \pp \mathbf{j}^2 \m \mathbf{k}^2 \m \mathbf{l}^2 = \p 2 \mathbf{j}^2 \m 2\mathbf{j}\cdot (\mathbf{k}+\mathbf{l}) \pp 2 \mathbf{k} \cdot \mathbf{l}.
$$
In order to obtain an \emph{efficient and practical} low regularity implementation we again
$$
\text{treat the \emph{dominant} quadratic  term $\p 2\mathbf{j}^2$ exactly}.
$$
This yields that
\begin{equation}\label{cubcal2d}
\begin{aligned}
 I_1^\tau(w,t_n)&=
 \sum_{\mathbf{\Omega} \in \mathbb{Z}^d}  \sum_{\substack{\mathbf{j},\mathbf{k},\mathbf{l}\in \mathbb{Z}^d\\\mathbf{\Omega} = -\mathbf{j}+\mathbf{k}+\mathbf{l}}}  \mathrm{e}^{i t_n \big[\p \mathbf{\Omega}^2 \pp \mathbf{j}^2\m\mathbf{k}^2\m\mathbf{l}^2\big]}\overline{\hat{w}_{\mathbf{j}}}\hat{w}_{\mathbf{k}}\hat{w}_{\mathbf{l}}\mathrm{e}^{i \,\mathbf{\Omega}\cdot \mathbf{x}}\int_0^\tau \mathrm{e}^{\p 2i s \mathbf{j}^2}\mathrm{d}s+ \mathcal{R}_1^\tau(w,t_n)\\
&= \sum_{\mathbf{\Omega} \in \mathbb{Z}^d} \sum_{\substack{\mathbf{j},\mathbf{k},\mathbf{l}\in \mathbb{Z}^d\\\mathbf{\Omega} = -\mathbf{j}+\mathbf{k}+\mathbf{l}}}  \mathrm{e}^{i t_n \big[\p\mathbf{\Omega}^2 \pp \mathbf{j}^2\m\mathbf{k}^2\m\mathbf{l}^2\big]}\overline{\hat{w}_{\mathbf{j}}}\hat{w}_{\mathbf{k}}\hat{w}_{\mathbf{l}}\mathrm{e}^{i \,\mathbf{\Omega}\cdot \mathbf{x}}\varphi_1(\p 2i \tau \mathbf{j}^2)+ \mathcal{R}_1^\tau(w,t_n),
\end{aligned}
\end{equation}
where similarly to above   (see \eqref{rem1cub})   we have that
\begin{equation*}\label{rem1cubd}
\Vert \mathcal{R}_1^{\tau}(w,t_n)\Vert_r \leq c \tau^{1+\gamma} \Vert w\Vert_{r+\gamma}^{3}\quad \text{ for } \quad 0 \leq \gamma \leq 1, \quad r > d/2
\end{equation*}
for some constant $c>0$.

Inserting the approximation \eqref{cubcal2d} into \eqref{vsolap1} and twisting the solution back again, i.e., setting
$$u^n := \mathrm{e}^{\p i t_n \Delta} v^n$$
yields the generalization of  the exponential-type integrator \eqref{cubscheme} to dimensions $d\geq 1$
\begin{equation*}\label{schemecubd}
  u^{n+1} = \mathrm{e}^{\p i \tau \Delta} \left[ u^n - i \tau \mu (u^n)^2\Big( \varphi_1(\m 2 i \tau \Delta)\overline{u^n}\Big)\right].
\end{equation*}

Next we consider the general case of \eqref{nls}  with $p \in \mathbb{N}$.
\subsection{Nonlinearities with integer \textbf{\textit{p}}}\label{sec:p}
 In the case   of a general nonlinearity   $p \in \mathbb{N}$ in \eqref{nls} the integral in \eqref{vsolap1} can be expressed in terms of the Fourier expansion as follows
\begin{equation*}
\begin{aligned}\label{cal1}
 I_p^\tau&(w,t_n)=
 \int_0^\tau \mathrm{e}^{\m i (t_n+s) \Delta} \Big[ \vert  \mathrm{e}^{\p i (t_n+s) \Delta} w\vert^{2p}  \mathrm{e}^{\p i (t_n+s) \Delta} w\Big]\mathrm{d}s.
 \end{aligned}
\end{equation*}
Similar observations to above  (see also \cite{SchO16}) yield the following numerical scheme
\begin{equation}\label{schemeP}
v^{n+1} =  v^n  - i \mu \tau \mathrm{e}^{\m i t_n \Delta} \left[ \left( \mathrm{e}^{\p i t_n \Delta} v^n\right)^{p+1} \left(\varphi_1(\m 2i \tau \Delta) \left(  \mathrm{e}^{\m i t_n \Delta}  \overline{v^n}\right)^{p}\right)\right]
\end{equation}
which satisfies
\begin{equation}\label{cal2}
v^{n+1} = v^n - i \mu I_p^\tau(v^n,t_n) -i \mu  \mathcal{R}^\tau_p(v^n,t_n),
\end{equation}
with
\begin{equation}\label{rem1}
\Vert \mathcal{R}_p^{\tau}(v^n,t_n)\Vert_r \leq c \tau^{1+\gamma} \Vert v^n\Vert_{r+\gamma}^{2p+1}\quad \text{ for } \quad 0 \leq \gamma \leq 1, \quad r > d/2
\end{equation}
for some constant $c>0$.

In the original variable $u$ the exponential-type integration scheme for the nonlinear Schr\"odinger equation \eqref{nls} reads
\begin{equation}\label{schemePu}
u^{n+1} =  \mathrm{e}^{\p i \tau \Delta}\left[ u^n - i \mu \tau \big( u^n\big)^{p+1}\Big(\varphi_1(\m 2i \tau \Delta) \big( \overline{u^n}\big)^{p}\Big)\right].\end{equation}
In the following section we give an error analysis for the above scheme.

\section{Error analysis}\label{sec:err}
In this section we carry out the error analysis of the exponential-type integrator \eqref{schemeP}. In the following, we set
\begin{equation*}\label{flowP}
\Phi_t^\tau(w) := w - i \mu \tau \mathrm{e}^{\m i t \Delta} \left[ \left( \mathrm{e}^{\p i t \Delta} w\right)^{p+1} \Big(\varphi_1(\m 2 i \tau \Delta)\left(  \mathrm{e}^{\m i t \Delta}  \overline{w}\right)^{p}\Big)\right]
\end{equation*}
such that in particular $v^{k+1} = \Phi_{t_k}^\tau(v^k)$.

\begin{lemma}[Stability]\label{lem:stabP}
Let $r>d/2$ and $f,g\in H^r$. Then, for all $t \in \mathbb{R}$ we have
\begin{equation*}
\Vert \Phi_t^\tau(f) - \Phi_t^\tau(g)\Vert_r \leq (1+ \tau \vert \mu \vert L) \Vert f - g \Vert_r,
\end{equation*}
where $L$ depends on $\Vert f \Vert_r$ and $\Vert g \Vert_r$.
\end{lemma}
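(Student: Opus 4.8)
The plan is to estimate the difference $\Phi_t^\tau(f)-\Phi_t^\tau(g)$ directly from the definition of the flow map, exploiting that $\mathrm{e}^{\pm i t\Delta}$ and $\varphi_1(\m 2i\tau\Delta)$ are bounded operators on $H^r$ together with the bilinear estimate \eqref{bil}. Writing out the definition, the difference of the two flows equals
\begin{equation*}
\Phi_t^\tau(f)-\Phi_t^\tau(g) = (f-g) - i\mu\tau\,\mathrm{e}^{\m i t\Delta}\Big[\big(\mathrm{e}^{\p i t\Delta}f\big)^{p+1}\big(\varphi_1(\m 2i\tau\Delta)(\mathrm{e}^{\m i t\Delta}\overline{f})^{p}\big) - \big(\mathrm{e}^{\p i t\Delta}g\big)^{p+1}\big(\varphi_1(\m 2i\tau\Delta)(\mathrm{e}^{\m i t\Delta}\overline{g})^{p}\big)\Big].
\end{equation*}
Taking the $H^r$ norm and using the triangle inequality, the first term contributes $\Vert f-g\Vert_r$, which produces the leading $1$ in the claimed bound. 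The second term is $\tau\vert\mu\vert$ times the $H^r$ norm of the nonlinear difference, and the whole task reduces to bounding that difference by $L\Vert f-g\Vert_r$ with $L$ depending only on $\Vert f\Vert_r$ and $\Vert g\Vert_r$.

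For the nonlinear term I would use the standard telescoping (add-and-subtract) technique for differences of products. Since both the $(p+1)$-fold factor $(\mathrm{e}^{\p i t\Delta}w)^{p+1}$ and the $p$-fold factor inside $\varphi_1$ are polynomial in $w,\overline{w}$, their difference at $f$ versus $g$ can be written as a sum of terms, each containing exactly one factor of the form $\mathrm{e}^{\p i t\Delta}(f-g)$ or $\mathrm{e}^{\m i t\Delta}(\overline{f}-\overline{g})$, multiplied by remaining factors that are powers of $\mathrm{e}^{\pm i t\Delta}f$ and $\mathrm{e}^{\pm i t\Delta}g$. Because $\mathrm{e}^{\pm i t\Delta}$ is a linear isometry on $H^r$ and $\varphi_1(\m 2i\tau\Delta)$ is bounded on $H^r$ uniformly in $\tau$ (its Fourier multiplier $\varphi_1(\m 2i\tau k^2)$ is uniformly bounded, as used already in deriving \eqref{rem1cub}), each such term is controlled via repeated application of \eqref{bil}. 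A representative term is bounded by $c_{r,d}^{2p}\Vert f-g\Vert_r$ times a product of $2p$ factors each equal to $\Vert f\Vert_r$ or $\Vert g\Vert_r$.

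Summing the finitely many telescoping terms then yields
\begin{equation*}
\Vert \Phi_t^\tau(f)-\Phi_t^\tau(g)\Vert_r \leq \Vert f-g\Vert_r + \tau\vert\mu\vert\, L\,\Vert f-g\Vert_r,
\end{equation*}
with $L = c\big(\Vert f\Vert_r + \Vert g\Vert_r\big)^{2p}$ for a suitable constant depending on $r,d,p$; this is exactly the asserted estimate. The main point to be careful about—rather than a deep obstacle—is that the bound on $L$ must be uniform in $t$ and $\tau$: this is guaranteed precisely because $\mathrm{e}^{\pm i t\Delta}$ is an isometry (so the $t$-dependence drops out of all norms) and because the $\varphi_1$ multiplier is bounded independently of $\tau$. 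The combinatorial bookkeeping of the telescoping sum is routine once one observes that the number of terms depends only on $p$, and the only analytic input is the algebra property \eqref{bil} valid for $r>d/2$.
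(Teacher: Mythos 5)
Your proposal is correct and follows essentially the same route as the paper: isolate the $\Vert f-g\Vert_r$ term, then bound the nonlinear difference via the isometry of $\mathrm{e}^{\pm it\Delta}$, the uniform boundedness of the $\varphi_1$ multiplier, the algebra property \eqref{bil}, and a telescoping of the polynomial difference. The paper records the resulting constant as $L=c\sum_{l=0}^{2p}\Vert f\Vert_r^{2p-l}\Vert g\Vert_r^{l}$, which is exactly what your telescoping produces and is equivalent (up to a constant) to your $(\Vert f\Vert_r+\Vert g\Vert_r)^{2p}$.
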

\begin{proof}
Note that for all $t \in \mathbb{R}$ and $w \in H^r$ it holds by \eqref{conti} that
$$
\big\Vert \varphi_1(2 i t \Delta) w \big\Vert_r  \leq c \Vert w \Vert_r.
$$
Thus, as $\mathrm{e}^{i t \Delta}$ is a linear isometry on $H^r$ and $H^r$ is an algebra for $r > d/2$ we obtain that
\begin{equation*}\label{eq:stabP}
\begin{aligned}
\Vert \Phi_t^\tau(f) - \Phi_t^\tau(g)\Vert_r &  \leq \Vert f - g \Vert_r + \tau \vert \mu\vert \Big \Vert
\left( \mathrm{e}^{\p i t \Delta} f\right)^{p+1} \left( \varphi_1(\m 2 i \tau \Delta)\left(  \mathrm{e}^{\m i t \Delta}  \overline{f}\right)^{p}\right) \\&\qquad\qquad\qquad\qquad\qquad- \left( \mathrm{e}^{\p i t \Delta} g\right)^{p+1} \left( \varphi_1(\m 2 i \tau \Delta) \left(  \mathrm{e}^{\m i t \Delta}  \overline{g}\right)^{p}\right)\Big\Vert_r\\
& \leq  \Vert f - g \Vert_r + \tau   c  \vert \mu\vert  \Vert f-g\Vert_r \sum_{l=0}^{2p} \Vert f \Vert_r^{2p-l} \Vert g \Vert_r^{l}.
\end{aligned}
\end{equation*}
This proves the assertion.
\end{proof}
\begin{lemma}[Local error]\label{lem:locP}
Let $r>d/2$ and $0 \leq \gamma \leq 1$.  Assume that $v(t_k+t) = \phi^t(v(t_k)) \in H^{r+\gamma}$ for $0 \leq t \leq \tau$, where $\phi^t$ denotes the exact flow of \eqref{nichtegal}.   Then,
\begin{equation*}
\Vert \phi^\tau(v(t_k)) - \Phi_{t_k}^\tau(v(t_k))\Vert_r \leq c \tau^{1+\gamma},
\end{equation*}
where $c$ depends on $\sup_{0 \leq t \leq \tau} \Vert \phi^t(v(t_k))\Vert_{r+\gamma}$.
\end{lemma}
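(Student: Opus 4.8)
The plan is to express the local error as the difference between the exact increment of the twisted variable over one step and the numerical increment $\Phi_{t_k}^\tau$, and to track exactly where the two constructions diverge. Writing $v_k := v(t_k)$, Duhamel's formula \eqref{vsol} gives
\begin{equation*}
\phi^\tau(v_k) - v_k = -i\mu\, I_p^\tau\bigl(v(t_k+\cdot),t_k\bigr),
\end{equation*}
where here the integrand carries the \emph{exact} solution $v(t_k+s)$ rather than the frozen value $v_k$. By contrast the numerical flow satisfies, via \eqref{cal2},
\begin{equation*}
\Phi_{t_k}^\tau(v_k) - v_k = -i\mu\, I_p^\tau(v_k,t_k) - i\mu\,\mathcal{R}_p^\tau(v_k,t_k).
\end{equation*}
Subtracting, the local error splits into two conceptually distinct pieces: a \emph{freezing error} coming from replacing $v(t_k+s)$ by $v_k$ inside the nonlinear integrand, and the \emph{quadrature/approximation remainder} $\mathcal{R}_p^\tau(v_k,t_k)$ already estimated in \eqref{rem1}.

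First I would dispose of the remainder term: by \eqref{rem1} we have $\Vert \mathcal{R}_p^\tau(v_k,t_k)\Vert_r \leq c\,\tau^{1+\gamma}\Vert v_k\Vert_{r+\gamma}^{2p+1}$, and since $v_k = v(t_k) \in H^{r+\gamma}$ with norm bounded by $\sup_{0\le t\le\tau}\Vert\phi^t(v_k)\Vert_{r+\gamma}$, this is already of the claimed order $\tau^{1+\gamma}$. The substantive step is the freezing error
\begin{equation*}
\Bigl\Vert I_p^\tau\bigl(v(t_k+\cdot),t_k\bigr) - I_p^\tau(v_k,t_k)\Bigr\Vert_r,
\end{equation*}
which I would control by writing it as an integral over $s\in[0,\tau]$ of the difference of the two nonlinear integrands and then using that $\mathrm{e}^{\pm i t\Delta}$ is an $H^r$-isometry together with the algebra property \eqref{bil}. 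The key point, and the whole reason the twisted formulation helps, is that the integrand difference is multilinear in the factors $\mathrm{e}^{i(t_k+s)\Delta}v(t_k+s)$ and $\mathrm{e}^{i(t_k+s)\Delta}v_k$ (and their conjugates), so it telescopes into a sum of terms each containing exactly one factor $v(t_k+s)-v_k$. Estimating that factor by \eqref{locapp}, namely $\Vert v(t_k+s)-v_k\Vert_r \le s\,|\mu|\sup_{0\le\xi\le s}\Vert v(t_k+\xi)\Vert_r^{2p+1}$, gives a bound of order $s$, and integrating over $[0,\tau]$ yields an $\mathcal{O}(\tau^2)$ contribution to this piece.

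Combining the two pieces, the freezing error contributes $\mathcal{O}(\tau^2)$ and the remainder contributes $\mathcal{O}(\tau^{1+\gamma})$; since $0\le\gamma\le 1$ the latter dominates (or matches, when $\gamma=1$), giving the stated bound $\Vert\phi^\tau(v_k)-\Phi_{t_k}^\tau(v_k)\Vert_r \le c\,\tau^{1+\gamma}$ with $c$ depending on $\sup_{0\le t\le\tau}\Vert\phi^t(v_k)\Vert_{r+\gamma}$. The main obstacle I anticipate is bookkeeping rather than analysis: carefully justifying the multilinear telescoping of the frozen versus exact integrands at the level of the $H^r$ norm, so that each resulting summand genuinely isolates a single factor $v(t_k+s)-v_k$ that can be estimated in $H^r$ (using $r>d/2$ so that $H^r$ is an algebra), while the remaining factors stay uniformly bounded in $H^r$ by $\sup_{0\le t\le\tau}\Vert\phi^t(v_k)\Vert_{r+\gamma}$. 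One should also confirm that the remainder estimate \eqref{rem1} is applied at the correct argument $v_k$, whose $H^{r+\gamma}$ norm is exactly what the constant $c$ is allowed to depend on.
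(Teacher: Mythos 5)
Your proposal is correct and follows essentially the same route as the paper: it splits the local error into the remainder $\mathcal{R}_p^\tau(v(t_k),t_k)$ controlled by \eqref{rem1} and the freezing error controlled via the telescoping/multilinear estimate together with \eqref{locapp}, yielding $\mathcal{O}(\tau^{1+\gamma})+\mathcal{O}(\tau^2)$ exactly as in the paper's proof. The only difference is that you spell out the telescoping of the difference of products explicitly, whereas the paper absorbs it into the single bound $|\mu|\sup_{0\le s\le\tau}\Vert v(t_k+s)\Vert_r^{2p}\int_0^\tau\Vert v(t_k+s)-v(t_k)\Vert_r\,\mathrm{d}s$.
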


\begin{proof}
Note that by \eqref{cal2} we have that
\begin{equation*}
\begin{aligned}
\Phi_{t_k}^\tau(v(t_k)) & = v(t_k) - i \mu \int_0^\tau \mathrm{e}^{\m i (t_k+s) \Delta} \Big[ \vert  \mathrm{e}^{\p i (t_k+s) \Delta} v(t_k)\vert^{2p}  \mathrm{e}^{\p i (t_k+s) \Delta} v(t_k)\Big]\mathrm{d}s + i \mu  \mathcal{R}^\tau_p(v(t_k),t_k).
\end{aligned}
\end{equation*}
Furthermore,   \eqref{vsol} implies   that
$$
\phi^\tau(v(t_k))   = v(t_k) - i \mu \int_0^\tau \mathrm{e}^{\m i (t_k+s) \Delta} \Big[ \vert  \mathrm{e}^{\p i (t_k+s) \Delta} v(t_k+s)\vert^{2p}  \mathrm{e}^{\m i (t_k+s) \Delta} v(t_k+s)\Big]\mathrm{d}s.
$$
Thus as $\mathrm{e}^{i t \Delta}$ is a linear isometry on $H^r$ and $H^r$ is an algebra for $r > d/2$ we obtain that
\begin{equation*}\label{le}
\begin{aligned}
\Vert \phi^\tau(v(t_k)) - \Phi_{t_k}^\tau(v(t_k))\Vert_r &\leq \vert \mu\vert \sup_{0 \leq s \leq \tau} \Vert v(t_{k}+s)\Vert_r ^{2p} \int_0^\tau
\Vert v(t_{k}+s) - v(t_{k})\Vert_r \,
 \mathrm{d}s \\&\qquad\qquad+   \vert \mu \vert   \Vert \mathcal{R}_p^\tau(v(t_k),t_k)\Vert_r.
\end{aligned}
\end{equation*}
  Together with the estimate on the difference $v(t_k+s) - v(t_k)$ given  in \eqref{locapp} and the estimate on the remainder $ \mathcal{R}_p^\tau(v,t)$ given in \eqref{rem1} this yields for $r>d/2$ and $0 \leq \gamma \leq $  that
\begin{equation*}
\begin{aligned}
\Vert \phi^\tau(v(t_k)) - \Phi_{t_k}^\tau(v(t_k))\Vert_r \leq  \tau^2 c\vert \mu \vert^2  \sup_{0 \leq s \leq \tau} \Vert v(t_k+s)\Vert_r^{4p+1} +   \tau^{1+\gamma} c\vert \mu \vert  \Vert v(t_k)\Vert_{r+\gamma}^{2p+1}
\end{aligned}
\end{equation*}
which concludes the  stated local error bound.
\end{proof}
These two lemmata allow us to prove the following convergence bound.
\begin{theorem}\label{thm1P}
Let $r>d/2$ and $0   <   \gamma \leq 1$. Assume that the exact solution of \eqref{nichtegal} satisfies $v(t) \in H^{r+\gamma}$ for $0 \leq t \leq T$. Then, there exists a  constant   $\tau_0>0$ such that for all  step sizes   $\tau \leq \tau_0$ and $t_n \leq T$ we have that the global error of \eqref{schemeP} is bounded by
\begin{equation*}
\Vert v(t_{n}) - v^{n} \Vert_r \leq c \tau^\gamma,
\end{equation*}
where $c$ depends on $\sup_{0\leq t \leq T} \Vert v(t)\Vert_{r+\gamma}$.
\end{theorem}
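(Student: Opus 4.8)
The plan is to run a standard Lady Windermere's fan argument, feeding the one-step local error bound of Lemma~\ref{lem:locP} into the Lipschitz-type stability estimate of Lemma~\ref{lem:stabP} through a telescoping decomposition of the global error. Writing $e^n := v(t_n) - v^n$ and recalling that the numerical flow satisfies $v^{n+1} = \Phi_{t_n}^\tau(v^n)$ while the exact solution propagates as $v(t_{n+1}) = \phi^\tau(v(t_n))$, I would split the one-step error as
\begin{equation*}
e^{n+1} = \big[\phi^\tau(v(t_n)) - \Phi_{t_n}^\tau(v(t_n))\big] + \big[\Phi_{t_n}^\tau(v(t_n)) - \Phi_{t_n}^\tau(v^n)\big].
\end{equation*}
The first bracket is precisely the local error controlled by Lemma~\ref{lem:locP}, bounded by $c\tau^{1+\gamma}$ uniformly in $n$ since its constant depends only on $\sup_{0\le t\le T}\Vert v(t)\Vert_{r+\gamma}$, which is finite by the hypothesis $v(t)\in H^{r+\gamma}$. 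The second bracket is the propagation of the accumulated error, to which Lemma~\ref{lem:stabP} applies with $f = v(t_n)$ and $g = v^n$.

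Combining the two bounds yields the scalar recursion $\Vert e^{n+1}\Vert_r \leq (1+\tau\vert\mu\vert L)\,\Vert e^n\Vert_r + c\tau^{1+\gamma}$ with $e^0 = 0$. Iterating this inequality and summing the resulting geometric series gives
\begin{equation*}
\Vert e^n\Vert_r \leq c\tau^{1+\gamma}\sum_{k=0}^{n-1}(1+\tau\vert\mu\vert L)^k = c\tau^{1+\gamma}\,\frac{(1+\tau\vert\mu\vert L)^n - 1}{\tau\vert\mu\vert L} \leq \frac{c}{\vert\mu\vert L}\big(\mathrm{e}^{\vert\mu\vert L T}-1\big)\tau^\gamma,
\end{equation*}
where I use $(1+x)^n \leq \mathrm{e}^{nx}$ together with $n\tau \leq T$. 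This is exactly the claimed $\mathcal{O}(\tau^\gamma)$ bound, and it is here that the hypothesis $\gamma>0$ enters: it is what forces the right-hand side to tend to zero as $\tau \to 0$ and, as explained below, what makes the bootstrap threshold $\tau_0$ effective.

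The one genuine subtlety — and the step I expect to require the most care — is that the stability constant $L$ in Lemma~\ref{lem:stabP} depends on $\Vert v(t_n)\Vert_r$ and $\Vert v^n\Vert_r$. The exact-solution norm is harmless, being bounded by $M := \sup_{0\le t\le T}\Vert v(t)\Vert_{r+\gamma}$ via $\Vert\cdot\Vert_r \leq \Vert\cdot\Vert_{r+\gamma}$, but the numerical norm $\Vert v^n\Vert_r$ must itself be controlled a priori before $L$ can be treated as a fixed constant. I would close this gap by an induction on $n$: assuming $\Vert v^k\Vert_r \leq 2M$ for all $k<n$ fixes a uniform Lipschitz constant $L = L(M)$, the Gronwall step above then yields $\Vert e^n\Vert_r \leq C\tau^\gamma$, and choosing $\tau_0$ small enough that $C\tau_0^\gamma \leq M$ gives $\Vert v^n\Vert_r \leq \Vert v(t_n)\Vert_r + \Vert e^n\Vert_r \leq 2M$, which closes the induction. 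This bootstrap is what pins down the threshold $\tau_0>0$ and guarantees that the final constant $c$ depends only on $\sup_{0\le t\le T}\Vert v(t)\Vert_{r+\gamma}$, as asserted.
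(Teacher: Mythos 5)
Your proposal is correct and follows essentially the same route as the paper: the same triangle-inequality decomposition into local error (Lemma~\ref{lem:locP}) plus propagated error (Lemma~\ref{lem:stabP}), the same scalar recursion, and the same discrete Gronwall/Lady Windermere's fan iteration. The only difference is that you spell out the bootstrap induction controlling $\Vert v^n\Vert_r$ (and hence fixing the Lipschitz constant $L$ and the threshold $\tau_0$), which the paper leaves implicit in its closing reference to ``induction, respectively, a \emph{Lady Windermere's fan} argument''; making that explicit is a welcome, not a divergent, refinement.
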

\begin{proof}
The triangle inequality yields that
\begin{equation*}\label{inqP}
\begin{aligned}
\Vert v(t_{k+1}) - v^{k+1}\Vert_{r} & = \Vert \phi^\tau(v(t_k) )- \Phi^\tau_{t_k}(v^k)\Vert_{r} \\
& \leq \Vert  \phi^\tau(v(t_k)) - \Phi^\tau_{t_k}(v(t_k)) \Vert_{r} + \Vert \Phi^\tau_{t_k}(v(t_k)) - \Phi^\tau_{t_k}(v^k)  \Vert_{r}.
\end{aligned}
\end{equation*}
  Thus, we obtain with the aid of Lemmata \ref{lem:locP} and \ref{lem:stabP} that as long as $ v^{k} \in H^r$ we have that
\begin{equation}\label{inq2}
\begin{aligned}
\Vert v(t_{k+1}) - v^{k+1}\Vert_{r} & \leq c \tau^{1+\gamma} +\left(1+\tau \vert \mu\vert L\right) \Vert v(t_k)-v^k\Vert_{r},
\end{aligned}
\end{equation}
where $c$ depends on $\mathrm{sup}_{t_k \leq t \leq t_{k+1}} \Vert v(t)\Vert_{r+\gamma}$ and $L$ depends on $\Vert v(t_k)\Vert_{r}$ as well as on $ \Vert v^{k} \Vert_{r}$.   As long as $v(t_k) \in H^{r+\gamma}$ and $v^k \in H^r$ for $0 \leq k \leq n$ we therefore obtain by iterating the estimate~\eqref{inq2} that
\begin{equation*}
\begin{aligned}
\Vert v(t_{n+1}) - v^{n+1}\Vert_{r} &  \leq c \tau^{1+\gamma} +\left(1+\tau \vert \mu\vert L\right) \Vert v(t_n)-v^n\Vert_{r} \\
& \leq  c \tau^{1+\gamma} + \mathrm{e}^{\tau  \vert \mu\vert L} \left(
 c \tau^{1+\gamma}  + \left(1+\tau \vert \mu\vert L\right) \Vert v(t_{n-1})-v^{n-1}\Vert_{r}
\right)\\
& \leq c n \tau^{1+\gamma} \mathrm{e}^{n \tau \vert \mu \vert L} \leq c t_n \tau^\gamma \mathrm{e}^{t_n \vert \mu \vert L}.
\end{aligned}
\end{equation*}
 The assertion then follows by \blue induction\black , respectively, \blue a \black \emph{Lady Windermere's fan} argument (see, for example \cite{Faou12,HNW93,Lubich08}).
\end{proof}
The above theorem immediately yields a convergence result for the exponential-type integration scheme~\eqref{schemeP} approximating the solution of the nonlinear Schr\"odinger equation \eqref{nls}.
\begin{corollary}\label{coru1P}
Let $r>d/2$ and $0   <   \gamma \leq 1$. Assume that the exact solution of \eqref{nls} satisfies $u(t) \in H^{r+\gamma}$ for $0 \leq t \leq T$. Then, there exists a  constant   $\tau_0>0$ such that for all  step sizes   $\tau \leq \tau_0$ and $t_n \leq T$ we have that the global error of \eqref{schemePu} is bounded by
\begin{equation*}
\Vert u(t_{n}) - u^{n} \Vert_r \leq c \tau^\gamma,
\end{equation*}
where $c$ depends on $\sup_{0\leq t \leq T} \Vert u(t)\Vert_{r+\gamma}$.
\end{corollary}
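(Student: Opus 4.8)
The plan is to reduce Corollary~\ref{coru1P} directly to Theorem~\ref{thm1P} by exploiting the twisting transformation $v(t) = \mathrm{e}^{-it\Delta}u(t)$ together with the fact that $\mathrm{e}^{it\Delta}$ is a linear isometry on every Sobolev space $H^s$. Indeed, the scheme \eqref{schemePu} for $u$ was derived from the scheme \eqref{schemeP} for $v$ precisely by setting $u^n = \mathrm{e}^{+it_n\Delta}v^n$, so the numerical iterates in the two variables are related by exactly the same unitary factor as the exact solutions $u(t_n) = \mathrm{e}^{+it_n\Delta}v(t_n)$.

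First I would record the consequence of the isometry property for the regularity hypothesis. Since $\mathrm{e}^{it\Delta}$ preserves the $H^{r+\gamma}$ norm, the assumption is equivalent in either variable: $u(t) \in H^{r+\gamma}$ for $0 \leq t \leq T$ if and only if $v(t) \in H^{r+\gamma}$ on the same interval, with $\Vert v(t)\Vert_{r+\gamma} = \Vert u(t)\Vert_{r+\gamma}$. In particular the hypothesis of Theorem~\ref{thm1P} is met, and $\sup_{0 \leq t \leq T}\Vert v(t)\Vert_{r+\gamma} = \sup_{0 \leq t \leq T}\Vert u(t)\Vert_{r+\gamma}$, so the constant furnished by the theorem depends only on the quantity named in the corollary.

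Next I would transfer the error itself. Using $u(t_n) = \mathrm{e}^{+it_n\Delta}v(t_n)$ and $u^n = \mathrm{e}^{+it_n\Delta}v^n$ together with the isometry yields
\[
\Vert u(t_n) - u^n \Vert_r = \big\Vert \mathrm{e}^{+it_n\Delta}\bigl(v(t_n) - v^n\bigr)\big\Vert_r = \Vert v(t_n) - v^n\Vert_r,
\]
so the global error in the physical variable coincides exactly with the global error in the twisted variable already estimated in Theorem~\ref{thm1P}. Applying that bound to the right-hand side gives $\Vert u(t_n)-u^n\Vert_r \leq c\tau^\gamma$ with the stated dependence of $c$, which completes the argument.

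There is no genuine obstacle here: the whole content is that the unitary twist commutes with the $H^r$ norm and is shared by both the exact and the numerical flows, so the convergence analysis carried out for $v$ transfers verbatim to $u$. The only point that must be checked is that the identity $u^n = \mathrm{e}^{+it_n\Delta}v^n$ indeed holds for every $n$; this follows by a short induction from the way \eqref{schemePu} was obtained from \eqref{schemeP}, using the conjugation identity $\overline{\mathrm{e}^{-it\Delta}f} = \mathrm{e}^{+it\Delta}\overline{f}$ to match the conjugated factors.
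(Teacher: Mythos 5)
Your proposal is correct and matches the paper's own argument: both reduce the corollary to Theorem~\ref{thm1P} via the identity $\Vert u(t_n)-u^n\Vert_r = \Vert \mathrm{e}^{it_n\Delta}(v(t_n)-v^n)\Vert_r = \Vert v(t_n)-v^n\Vert_r$, using that $\mathrm{e}^{it\Delta}$ is an isometry on $H^s$. The extra remarks on transferring the regularity hypothesis and on $u^n = \mathrm{e}^{it_n\Delta}v^n$ holding by construction are consistent with (and implicit in) the paper's proof.
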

\begin{proof}
As $\mathrm{e}^{i t \Delta}$ is a linear isometry on $H^r$ for all $t \in \mathbb{R}$
we have that
$$
\Vert u(t_{n}) - u^{n} \Vert_r = \Vert \mathrm{e}^{\p i t_{n} \Delta} \big(v(t_{n}) - v^{n} \big) \Vert_r = \Vert v(t_{n}) - v^{n} \Vert_r.
$$
Thus, the assertion follows from Theorem~\ref{thm1P}.
\end{proof}

\section{Quadratic Schr\"odinger equations in one space dimension}\label{sec:qnls}
In this section we focus on quadratic nonlinear Schr\"odinger equations of type (see, e.g., \cite{BeTao06})
\begin{align}\label{eq:qnls1}
i \partial_t u = \m \partial_x^2 u + \mu u^2 ,\quad u(0,x) = u_0(x),\quad  (t,x) \in \mathbb{R} \times \mathbb{T},\quad \mu \in \mathbb{R}
\end{align}
as well as of type (see, e.g, \cite{KPV96})
\begin{align}\label{eq:qnls2}
i \partial_t u = \m  \partial_x^2 u +\mu \vert u\vert^2 ,\quad u(0,x) = u_0(x),\quad  (t,x) \in \mathbb{R} \times \mathbb{T},\quad \mu \in \mathbb{R},
\end{align}
which have recently gained a lot of attention in literature, see for instance \cite{Bour93,BeTao06,CaWe90,GerMS09,KPV96,KPV01,Kish09,NHT01,Tao01,Ts87} and the references therein.

Here, the key-relations
\begin{equation}\label{eq:keyd}
\p (k_1+k_2)^2 \m k_1^2 \m k_2^2 = \p 2k_1 k_2
\end{equation}
and
\begin{equation}\label{eq:keyd2}
\p (k_1-k_2)^2 \m k_1^2 \pp k_2^2 =  \m2k_2 (k_1- k_2)
\end{equation}
allow us to derive \emph{first-order exponential-type integrators} for Schr\"odinger equations of type \eqref{eq:qnls1} and \eqref{eq:qnls2} \emph{without imposing any additional regularity assumptions on the solution}.

\subsection{Quadratic Schr\"odinger equations of type (\ref{eq:qnls1})}\label{secQNLS1}
Set $v(t) = \mathrm{e}^{\m i t\partial_x^2 } u(t)$, where $u$ is the solution of the Schr\"odinger equation \eqref{eq:qnls1}. Then the twisted variable $v(t)$ satisfies
\begin{equation}\label{eq:v1}
i \partial_t v = \mu  \mathrm{e}^{\m i t\partial_x^2 }  \left( \mathrm{e}^{\p i t\partial_x^2 } v(t)\right)^2
\end{equation}
which by Duhamel's formula yields that
\begin{equation}
\begin{aligned}\label{eq:vdnls}
v(t_n+\tau) = v(t_n) - i \mu \int_0^\tau \mathrm{e}^{\m i (t_n+s) \partial_x^2} \left ( \mathrm{e}^{\p i (t_n+s) \partial_x^2 } v(t_n+s)\right)^2\mathrm{d}s.
\end{aligned}
\end{equation}
To construct our numerical scheme we proceed as above. We start from the following approximation
\begin{equation}\label{approxv1}
\begin{aligned}
v(t_n+\tau)
& \approx  v(t_n) - i \mu \int_0^\tau \mathrm{e}^{\m i (t_n+s) \partial_x^2}  \left(\mathrm{e}^{\p i  (t_n+s)\partial_x^2 } v(t_n)\right)^2\mathrm{d}s =: \Phi_{t_n}^\tau(v(t_n))
\end{aligned}
\end{equation}
and rewrite the above integral in Fourier space as follows
\begin{equation*}
\begin{aligned}
 I^\tau(w,t_n) & =  \int_0^\tau \mathrm{e}^{\m i  (t_n+s)\partial_x^2 }  \left( \mathrm{e}^{\p i  (t_n+s)\partial_x^2 } w\right)^2\mathrm{d}s \\& = \int_0^\tau  \mathrm{e}^{\m i  (t_n+s)\partial_x^2 } \Big[ \Big( \mathrm{e}^{\p i  (t_n+s)\partial_x^2 } \sum_{k_1} \hat{w}_{k_1} \mathrm{e}^{i k_1 x}\Big)\Big( \mathrm{e}^{\p i  (t_n+s)\partial_x^2 }  \sum_{k_2} \hat{w}_{k_2}\mathrm{e}^{i k_2 x}\Big)\Big] \mathrm{d}s\\
 & = \int_0^\tau \sum_{k_1,k_2} \mathrm{e}^{ i (t_n+s)\big[\p (k_1+k_2)^2 \m k_1^2 \m k_2^2\big]}  \hat{w}_{k_1}\hat{w}_{k_2} \mathrm{e}^{i (k_1+k_2)x}\mathrm{d}s.
 \end{aligned}
 \end{equation*}
 The key-relation \eqref{eq:keyd} now shows that
\begingroup
\allowdisplaybreaks
\begin{align*}
 I^\tau(w,t_n)
 & =  \sum_{\substack{k_1,k_2 \\ k_1, k_2 \neq 0}} \mathrm{e}^{ i t_n\big[\p (k_1+k_2)^2 \m k_1^2 \m k_2^2\big]} \frac{ \mathrm{e}^{ i \tau\big[\p (k_1+k_2)^2 \m k_1^2 \m k_2^2\big]}-1}{\p 2 i k_1 k_2}  \hat{w}_{k_1}\hat{w}_{k_2}\mathrm{e}^{i (k_1+k_2)x}\\
 &\qquad + 2 \tau \hat{w}_0 \sum_{k_1\neq 0} \hat{w}_{k_1} \mathrm{e}^{i k_1 x} + \tau \hat{w}_0^2 \\
  &    = \p \frac{i}{2} \sum_{\substack{k_1,k_2 \\ k_1, k_2 \neq 0}} \mathrm{e}^{ i t_n\big[\p (k_1+k_2)^2 \m k_1^2 \m k_2^2\big]} \frac{ \mathrm{e}^{ i \tau\big[\p (k_1+k_2)^2 \m k_1^2 \m k_2^2\big]}-1}{(i k_1)(ik_2)}  \hat{w}_{k_1}\hat{w}_{k_2}\mathrm{e}^{i (k_1+k_2)x}\\
 &  \qquad + 2 \tau \hat{w}_0 \sum_{k_1 \in \mathbb{Z}} \hat{w}_{k_1} \mathrm{e}^{i k_1 x} - \tau \hat{w}_0^2 \\
  & =\p   \frac{i}{2} \mathrm{e}^{\m i t_n\partial_x^2}\Big[\mathrm{e}^{\m i  \tau\partial_x^2 } \left( \mathrm{e}^{\p i (t_n+\tau) \partial_x^2 } \partial_x^{-1} w\right) ^2 -\left( \mathrm{e}^{\p i t_n \partial_x^2 }\partial_x^{-1} w\right)^2\Big] + 2\tau \hat{w}_0 w- \tau \hat{w}_0^2.
\end{align*}
\endgroup
Together with the approximation in \eqref{approxv1} this yields the following integration scheme:
\begin{equation}\label{scheme1v}
\begin{aligned}
v^{n+1} &= \Phi_{t_n}^\tau(v^n)\\&= \left(1 - 2i \mu \tau \hat{v}_0^n\right) v^n + i\mu \tau( \hat{v}_0^n)^2 \\&\qquad\qquad \pp \frac{\mu}{2} \mathrm{e}^{\m i t_n\partial_x^2}\Big[\mathrm{e}^{\m i  \tau\partial_x^2 } \left( \mathrm{e}^{\p i  (t_n+\tau)\partial_x^2} \partial_x^{-1} v^n\right) ^2 - \left( \mathrm{e}^{\p i t_n \partial_x^2}\partial_x^{-1} v^n\right)^2\Big].
\end{aligned}
\end{equation}
In order to obtain an approximation to the original solution $u(t)$ of the quadratic Schr\"odinger equation \eqref{eq:qnls1}  at time $t_n = n\tau$ we twist the variable back again, i.e., we set $u^{n} := \mathrm{e}^{\p it_n \partial_x^2 } v^n$. This yields the following exponential-type integrator
\begin{equation}\label{scheme1u}
u^{n+1} =  \left(1 - 2i \mu \tau \hat{u}_0^n\right) \mathrm{e}^{\p i\tau \partial_x^2} u^n+ i\mu \tau( \hat{u}_0^n)^2 \pp  \frac{\mu}{2} \left( \mathrm{e}^{\p i \tau \partial_x^2 } \partial_x^{-1} u^n\right) ^2 \m \frac{\mu}{2}  \mathrm{e}^{\p i \tau \partial_x^2}\left(\partial_x^{-1} u^n\right)^2.
\end{equation}

\subsection{Quadratic Schr\"odinger equations of type (\ref{eq:qnls2})}  \label{secQNLS2}
For the quadratic Schr\"odinger equation \eqref{eq:qnls2} the equation in the twisted variable $v(t) = \mathrm{e}^{\m it \partial_x^2 } u(t)$ reads
\begin{equation*}\label{eq:v2}
i \partial_t v = \mu  \mathrm{e}^{\m it \partial_x^2 }  \left\vert \mathrm{e}^{\p it \partial_x^2 } v(t)\right\vert^2.
\end{equation*}
To construct our numerical scheme we proceed as above. We start from the following approximation
\begin{equation}
\begin{aligned}\label{eq:vdnls2}
v(t_n+\tau)
& \approx  v(t_n) - i \mu \int_0^\tau \mathrm{e}^{\m i(t_n+s) \partial_x^2 }  \left\vert \mathrm{e}^{\p i(t_n+s) \partial_x^2 } v(t_n)\right\vert^2\mathrm{d}s
\end{aligned}
\end{equation}
and rewrite the above integral in Fourier space as follows
\begin{equation*}
\begin{aligned}
 I^\tau(w,t_n)& =  \int_0^\tau \mathrm{e}^{\m i(t_n+s) \partial_x^2 }  \left\vert  \mathrm{e}^{\p i(t_n+s) \partial_x^2 } w\right \vert^2\mathrm{d}s\\
 & = \int_0^\tau \sum_{k_1,k_2} \mathrm{e}^{ i (t_n+s)\big[\p (k_1-k_2)^2 \m k_1^2 \pp k_2^2\big]}  \hat{w}_{k_1} \overline{\hat{w}}_{k_2}\mathrm{e}^{i (k_1-k_2)x}\mathrm{d}s.
 \end{aligned}
 \end{equation*}
The key-relation \eqref{eq:keyd2} now shows that
 \begin{equation*}
 \begin{aligned}
  I^\tau(w,t_n) & =  \sum_{\substack{k_1,k_2 \\ k_1\neq k_2, k_2 \neq 0}} \mathrm{e}^{ i t_n\big[\p (k_1-k_2)^2 \m k_1^2 \pp k_2^2\big]} \frac{\mathrm{e}^{ i \tau)\big[\p (k_1-k_2)^2 \m k_1^2 \pp k_2^2\big]}-1}{\m 2i k_2 (k_1-k_2)}  \hat{w}_{k_1}\overline{\hat{w}}_{k_2} \mathrm{e}^{i (k_1-k_2)x}\\
 &\qquad +  \tau \overline{\hat{w}}_0 \sum_{k_1} \hat{w}_{k_1} \mathrm{e}^{i k_1 x} + \tau \Vert w\Vert_{L^2}^2 \\
  & =\p\frac{i}{2} \partial_x^{-1} \mathrm{e}^{\m i (t_n+\tau)\partial_x^2 } \Big[ \left( \mathrm{e}^{\p i(t_n+\tau) \partial_x^2 }w\right) \left( \mathrm{e}^{\m  i (t_n+\tau)\partial_x^2} \partial_x^{-1} \overline{w}\right) \Big]\\
  &\quad \m  \frac{i}{2} \partial_x^{-1}\mathrm{e}^{\m it_n \partial_x^2} \Big[ \left( \mathrm{e}^{\p it_n \partial_x^2 } w\right)\left( \mathrm{e}^{ \m it_n \partial_x^2}\partial_x^{-1} \overline{w}\right) \Big]  + \tau \overline{\hat{w}}_0w + \tau \Vert w\Vert_{L^2}^2.
\end{aligned}
\end{equation*}
Together with the approximation in \eqref{eq:vdnls2} this yields (by twisting the variable back again) the following integration scheme in the original solution $u(t) =  \mathrm{e}^{\p it_n \partial_x^2 } v(t)$
\begin{equation}\label{scheme2u}
\begin{aligned}
u^{n+1} & = \left(1 - i \mu \tau \overline{\hat{u}}_0^n\right) \mathrm{e}^{\p i \tau \partial_x^2}u^n -  i\mu \tau \Vert u^n\Vert_{L^2}^2\\&\qquad  \pp \frac{\mu}{2} \partial_x^{-1} \Big[ \left( \mathrm{e}^{\p i  \tau\partial_x^2 }u^n\right) \left( \mathrm{e}^{\m i\tau  \partial_x^2} \partial_x^{-1} \overline{u}^n\right)- \mathrm{e}^{\p i \tau \partial_x^2 } \left(u^n \partial_x^{-1} \overline{u}^n\right) \Big].
\end{aligned}
\end{equation}

\subsection{Error analysis}\label{sec:Qlocerr}
In this section we carry out the error analysis of the exponential-type integrators \eqref{scheme1u} and \eqref{scheme2u}. In the following let $r>1/2$.

We commence with the quadratic Schr\"odinger equation of type \eqref{eq:qnls1}. Let $\phi^t$ denote the exact flow of~\eqref{eq:v1}, i.e., $v(t) = \phi^t(v(0))$. The following lemmata provide a local error bound of the scheme \eqref{scheme1v}.
\begin{lemma}\label{lem:loc1}
Let $r>1/2$ and assume that $v(t_k+t) = \phi^t(v(t_k)) \in H^r$ for $0 \leq t \leq \tau$, where $\phi^t$ denotes the exact flow of \eqref{eq:qnls1}. Then
\begin{equation*}
\Vert \phi^\tau(v(t_k)) - \Phi_{t_k}^\tau(v(t_k))\Vert_r \leq c \tau^2,
\end{equation*}
where $c$ depends on $\sup_{0\leq t \leq \tau} \Vert \phi^t(v(t_k))\Vert_r$.
\end{lemma}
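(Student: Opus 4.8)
The plan is to mirror the proof of Lemma~\ref{lem:locP}, while exploiting the crucial structural simplification of the quadratic case: since the key-relation~\eqref{eq:keyd} allows the oscillatory integral $I^\tau(w,t_k)$ to be evaluated \emph{exactly} in Fourier space, the scheme $\Phi_{t_k}^\tau$ carries \emph{no} additional remainder term of the type $\mathcal{R}$ appearing in~\eqref{cal2}. Consequently the entire local error originates from the single approximation~\eqref{approxv1}, namely the replacement of $v(t_k+s)$ by $v(t_k)$ inside the integrand.

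First I would subtract the exact Duhamel representation~\eqref{eq:vdnls} from the definition~\eqref{approxv1} of $\Phi_{t_k}^\tau(v(t_k))$, writing the local error as the single integral
\begin{equation*}
\phi^\tau(v(t_k)) - \Phi_{t_k}^\tau(v(t_k)) = -i\mu \int_0^\tau \mathrm{e}^{-i(t_k+s)\partial_x^2}\Bigl[\bigl(\mathrm{e}^{i(t_k+s)\partial_x^2} v(t_k+s)\bigr)^2 - \bigl(\mathrm{e}^{i(t_k+s)\partial_x^2} v(t_k)\bigr)^2\Bigr]\mathrm{d}s.
\end{equation*}
Next I would factor the integrand through the difference of squares $a^2-b^2=(a-b)(a+b)$, with $a = \mathrm{e}^{i(t_k+s)\partial_x^2} v(t_k+s)$ and $b = \mathrm{e}^{i(t_k+s)\partial_x^2} v(t_k)$. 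Since $\mathrm{e}^{it\partial_x^2}$ is a linear isometry on $H^r$ and $H^r$ is an algebra for $r>1/2$, taking the $H^r$-norm under the integral sign yields
\begin{equation*}
\Vert \phi^\tau(v(t_k)) - \Phi_{t_k}^\tau(v(t_k))\Vert_r \leq c|\mu| \int_0^\tau \Vert v(t_k+s)-v(t_k)\Vert_r \, \Vert v(t_k+s)+v(t_k)\Vert_r \, \mathrm{d}s.
\end{equation*}

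The second factor is controlled by $2\sup_{0\leq t\leq\tau}\Vert v(t_k+t)\Vert_r$. For the first factor I would use the mild form of~\eqref{eq:v1}, arguing exactly as in~\eqref{locapp}, that $\Vert v(t_k+s)-v(t_k)\Vert_r \leq c\,s \sup_{0\leq\xi\leq\tau}\Vert v(t_k+\xi)\Vert_r^2$, again invoking only the isometry and the algebra property. Inserting this bound and integrating $\int_0^\tau s\,\mathrm{d}s = \tau^2/2$ then delivers the claimed estimate, with $c$ depending on $\sup_{0\leq t\leq\tau}\Vert\phi^t(v(t_k))\Vert_r$.

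The point to stress here -- rather than any genuine obstacle -- is that every estimate above relies solely on the algebra property of $H^r$ and the isometry of $\mathrm{e}^{it\partial_x^2}$, so that no spatial derivative is ever lost. This is precisely why, in contrast to Lemma~\ref{lem:locP} where the remainder~\eqref{rem1} forces the stronger norm $\Vert\cdot\Vert_{r+\gamma}$, the quadratic structure furnishes the full second-order local error $\tau^2$ under the bare assumption $v\in H^r$. The only mild care needed is that the exact evaluation of $I^\tau$ produces separate zero-mode ($k_1k_2=0$) contributions; these belong to the scheme $\Phi_{t_k}^\tau$ itself and cancel identically in the difference above, so they play no role in the error bound.
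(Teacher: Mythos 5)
Your proposal is correct and follows essentially the same route as the paper: subtract the exact Duhamel formula \eqref{eq:vdnls} from the frozen-coefficient approximation \eqref{approxv1}, factor the integrand as a difference of squares, use the isometry of $\mathrm{e}^{it\partial_x^2}$ together with the bilinear estimate \eqref{bil}, and then bound $\Vert v(t_k+s)-v(t_k)\Vert_r$ by $\mathcal{O}(s)$ via the mild formulation, yielding the $\tau^2$ bound. Your observation that the exact Fourier-space evaluation of $I^\tau$ leaves no remainder term of type $\mathcal{R}$ (and hence no loss of regularity) correctly identifies why this lemma, unlike Lemma~\ref{lem:locP}, needs only $v\in H^r$.
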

\begin{proof}
As $\mathrm{e}^{i t \partial_x^2}$ is a linear isometry on $H^r$ for all $t \in \mathbb{R}$ we obtain by taking the difference of~\eqref{eq:vdnls} with the approximation \eqref{approxv1} and using the bilinear estimate \eqref{bil} that
\begin{equation*}
\begin{aligned}
&\Vert \phi^\tau(v(t_k)) - \Phi_{t_k}^\tau(v(t_k))\Vert_r  \leq  | \mu | \int_0^\tau \Big \Vert \left( \mathrm{e}^{\p i  (t_k+s)\partial_x^2} v(t_k+s)\right)^2 -  \left(\mathrm{e}^{\p i  (t_k+s)\partial_x^2} v(t_k)\right)^2 \Big\Vert_r \mathrm{d}s\\
&\qquad \leq c |\mu| \tau \sup_{0 \leq s \leq \tau} \left(\Vert  \mathrm{e}^{\p i  (t_k+s)\partial_x^2}\left( v(t_k+s) - v(t_k)\right)\Vert_r\Vert  \mathrm{e}^{\p i  (t_k+s)\partial_x^2}\left( v(t_k+s) + v(t_k)\right)\Vert_r\right)\\
& \qquad \leq c |\mu| \tau \sup_{0 \leq s \leq \tau} \Vert  v(t_k+s) - v(t_k)\Vert_r\\
& \qquad \leq c |\mu|^2 \tau \sup_{0 \leq s \leq \tau} \int_0^s \Big\Vert  \left( \mathrm{e}^{\p i  (t_k+\xi)\partial_x^2 } v(t_k+\xi)\right)^2\Big\Vert_r \mathrm{d}\xi,
\end{aligned}
\end{equation*}
where $c$ depends on $\sup_{0\leq s \leq \tau} \Vert \phi^s(v(t_k))\Vert_r$. This yields the assertion.
\end{proof}
Next we state the stability estimate.
\begin{lemma}\label{lem:stab1}
Let $r>1/2$ and $f,g \in H^r$. Then, for all $t \in \mathbb{R}$ we have
\begin{equation*}
\Vert \Phi_t^\tau(f) - \Phi_t^\tau(g)\Vert_r \leq \left(1+\tau \vert \mu\vert L\right) \Vert f - g \Vert_r,
\end{equation*}
where $L$ depends on $\Vert f+ g \Vert_r$.
\end{lemma}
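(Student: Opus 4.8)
The plan is to work with the integral form of the numerical flow introduced in~\eqref{approxv1},
$$
\Phi_t^\tau(w) = w - i \mu \int_0^\tau \mathrm{e}^{\m i (t+s) \partial_x^2} \left(\mathrm{e}^{\p i (t+s)\partial_x^2} w\right)^2 \mathrm{d}s,
$$
rather than with the explicit Fourier representation in~\eqref{scheme1v}, since the difference of the two nonlinear contributions is far easier to control in integral form. First I would form the difference $\Phi_t^\tau(f) - \Phi_t^\tau(g)$, which equals $f-g$ minus $i\mu$ times the integral over $[0,\tau]$ of $\mathrm{e}^{\m i (t+s)\partial_x^2}$ applied to the difference of squares $\left(\mathrm{e}^{\p i (t+s)\partial_x^2} f\right)^2 - \left(\mathrm{e}^{\p i (t+s)\partial_x^2} g\right)^2$. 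Because $\mathrm{e}^{i t \partial_x^2}$ is a linear isometry on $H^r$, the outer exponential drops out of the norm, and the triangle inequality isolates the linear term $\Vert f-g\Vert_r$.

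The decisive step is to factor the difference of squares as $a^2 - b^2 = (a-b)(a+b)$ with $a = \mathrm{e}^{\p i (t+s)\partial_x^2} f$ and $b = \mathrm{e}^{\p i (t+s)\partial_x^2} g$. The bilinear estimate~\eqref{bil}, valid since $r>1/2$, then bounds $\Vert a^2 - b^2\Vert_r$ by $c_{r,1}\Vert a-b\Vert_r\,\Vert a+b\Vert_r$; invoking the isometry once more gives $\Vert a-b\Vert_r = \Vert f-g\Vert_r$ and $\Vert a+b\Vert_r = \Vert f+g\Vert_r$. Carrying out the $s$-integration merely produces the factor $\tau$, so altogether
$$
\Vert\Phi_t^\tau(f) - \Phi_t^\tau(g)\Vert_r \leq \Vert f-g\Vert_r + \tau \vert\mu\vert\, c_{r,1}\, \Vert f+g\Vert_r\, \Vert f-g\Vert_r,
$$
which is the asserted bound with $L = c_{r,1}\Vert f+g\Vert_r$.

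The point worth emphasizing is that it is precisely the factorization $a^2 - b^2 = (a-b)(a+b)$ that makes the Lipschitz constant depend only on $\Vert f+g\Vert_r$, in contrast to the general nonlinearity treated in Lemma~\ref{lem:stabP}, where the corresponding constant appears as a sum $\sum_{l=0}^{2p}\Vert f\Vert_r^{2p-l}\Vert g\Vert_r^{l}$. There is essentially no genuine obstacle here: the estimate follows directly from the isometry property of the free group together with the algebra structure of $H^r$ for $r>1/2$, and the only thing one must watch is keeping track of which factor carries $f-g$ and which carries $f+g$, so that the stated dependence of $L$ on $\Vert f+g\Vert_r$ emerges cleanly.
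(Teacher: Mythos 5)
Your proposal is correct and follows exactly the route the paper intends: the paper's own proof is a one-line reference to the integral representation \eqref{approxv1} together with the bilinear estimate \eqref{bil}, and your argument (difference of squares, isometry of $\mathrm{e}^{it\partial_x^2}$, bilinear estimate, integration producing the factor $\tau$) is precisely the omitted computation. The observation that the factorization $a^2-b^2=(a-b)(a+b)$ is what makes $L$ depend only on $\Vert f+g\Vert_r$ is also the right reading of the statement.
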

\begin{proof}
The assertion follows from the representation of the numerical flow given in \eqref{approxv1} together with the bilinear estimate \eqref{bil}.
\end{proof}
The above lemmata allow us to prove the following global convergence result.
\begin{theorem}\label{thm1}
Let $r>1/2$. Assume that the exact solution of \eqref{eq:v1} satisfies $v(t) \in H^{r}$ for $0 \leq t \leq T$. Then, there exists a  constant   $\tau_0>0$ such that for all  step sizes   $\tau \leq \tau_0$ and $t_n \leq T$ we have that the global error of \eqref{scheme1v} is bounded by
\begin{equation*}
\Vert v(t_{n}) - v^{n} \Vert_r \leq c \tau,
\end{equation*}
where $c$ depends on $\sup_{0\leq t \leq T} \Vert v(t)\Vert_r$.
\end{theorem}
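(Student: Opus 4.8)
The plan is to prove this global convergence bound by the standard Lady Windermere's fan argument, combining the local error bound from Lemma~\ref{lem:loc1} with the stability estimate from Lemma~\ref{lem:stab1}, exactly in the spirit of the proof of Theorem~\ref{thm1P}. Since the local error is now of order $\tau^2$ rather than $\tau^{1+\gamma}$, accumulating it over the $n \sim t_n/\tau$ time steps will produce a global error of order $\tau$.

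First I would use $v(t_{k+1}) = \phi^\tau(v(t_k))$ and $v^{k+1} = \Phi_{t_k}^\tau(v^k)$ together with the triangle inequality to split the propagated error,
$$\Vert v(t_{k+1}) - v^{k+1}\Vert_{r} \leq \Vert \phi^\tau(v(t_k)) - \Phi^\tau_{t_k}(v(t_k)) \Vert_{r} + \Vert \Phi^\tau_{t_k}(v(t_k)) - \Phi^\tau_{t_k}(v^k) \Vert_{r}.$$
Lemma~\ref{lem:loc1} bounds the first (local error) term by $c\tau^2$, while Lemma~\ref{lem:stab1} bounds the second by $(1+\tau|\mu|L)\Vert v(t_k) - v^k\Vert_r$, giving the one-step recursion $\Vert v(t_{k+1}) - v^{k+1}\Vert_{r} \leq c\tau^2 + (1+\tau|\mu|L)\Vert v(t_k) - v^k\Vert_{r}$.

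Next I would iterate this recursion from $k=0$, where $v^0 = v(0)$ so the error vanishes initially, up to $k=n-1$. Using the geometric-sum estimate together with $(1+\tau|\mu|L)^n \leq \mathrm{e}^{t_n|\mu|L}$ and $n\tau^2 = t_n\tau$, the iteration yields
$$\Vert v(t_n) - v^n\Vert_r \leq c\, n\tau^2\, \mathrm{e}^{t_n|\mu|L} \leq c\, t_n\, \tau\, \mathrm{e}^{t_n|\mu|L},$$
which is the claimed $\mathcal{O}(\tau)$ bound.

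The hard part will be that the stability constant $L$ in Lemma~\ref{lem:stab1} depends on $\Vert v(t_k)+v^k\Vert_r$, and hence on the $H^r$-norm of the as-yet-uncontrolled numerical solution $v^k$. I would close this gap by induction: assuming the error bound $\Vert v(t_k)-v^k\Vert_r \leq c\tau$ holds up to step $k$, the triangle inequality gives $\Vert v^k\Vert_r \leq \sup_{0\le t\le T}\Vert v(t)\Vert_r + c\tau$, so for $\tau \leq \tau_0$ small enough the numerical solution stays in a fixed ball around the exact solution, keeping $L$ uniformly bounded. This uniform bound on $L$ is precisely what renders the constant $c$ in the iteration independent of $k$ and closes the induction, completing the Lady Windermere's fan argument.
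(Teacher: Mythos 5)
Your proposal is correct and follows essentially the same route as the paper, which simply invokes the argument of Theorem~\ref{thm1P} with the local error bound of Lemma~\ref{lem:loc1} and the stability estimate of Lemma~\ref{lem:stab1}. Your explicit handling of the induction needed to keep the stability constant $L$ uniformly bounded is exactly the detail the paper leaves implicit in its \emph{Lady Windermere's fan} reference.
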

\begin{proof}
The proof follows the line of argumentation to the proof of Theorem \ref{thm1P} thanks to the local error bound in Lemma \ref{lem:loc1} and the stability estimate of Lemma \ref{lem:stab1}.
\end{proof}
The above theorem immediately yields first-order convergence of the exponential-type integrator~\eqref{scheme1u} (in $H^r$) without any additional smoothness assumptions on the exact solution of~\eqref{eq:qnls1}.
\begin{corollary}\label{coru1}
Let $r>1/2$. Assume that the exact solution of \eqref{eq:qnls1} satisfies $v(t) \in H^{r}$ for $0 \leq t \leq T$. Then, there exists a  constant   $\tau_0>0$ such that for all  step sizes   $\tau \leq \tau_0$ and $t_n \leq T$ we have that the global error of \eqref{scheme1u} is bounded by
\begin{equation*}
\Vert u(t_{n}) - u^{n} \Vert_r \leq c \tau,
\end{equation*}
where $c$ depends on $\sup_{0\leq t \leq T} \Vert u(t)\Vert_r$.
\end{corollary}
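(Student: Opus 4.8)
The plan is to obtain Corollary~\ref{coru1} as the original-variable counterpart of Theorem~\ref{thm1}, exactly as Corollary~\ref{coru1P} was deduced from Theorem~\ref{thm1P}. The structural fact I would exploit is that the twist $u = \mathrm{e}^{\p i t \partial_x^2} v$ linking the solution of the quadratic equation \eqref{eq:qnls1} to the solution of the twisted equation \eqref{eq:v1} is a linear isometry on $H^r$ for every $t \in \mathbb{R}$. Consequently it neither creates nor destroys regularity, and it preserves the $H^r$ distance between the exact and the numerical trajectories.

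First I would record that the scheme \eqref{scheme1u} in the original variable is obtained from the twisted scheme \eqref{scheme1v} precisely through the substitution $u^n := \mathrm{e}^{\p i t_n \partial_x^2} v^n$; this is how \eqref{scheme1u} was derived above, and one checks that iterating \eqref{scheme1u} reproduces $\mathrm{e}^{\p i t_n \partial_x^2}$ applied to the iterates of \eqref{scheme1v}, so that $u^n = \mathrm{e}^{\p i t_n \partial_x^2} v^n$ holds for all $n$. Since the exact solutions satisfy $u(t_n) = \mathrm{e}^{\p i t_n \partial_x^2} v(t_n)$ and $\mathrm{e}^{\p i t_n \partial_x^2}$ is an isometry on $H^r$, I would then write
\begin{equation*}
\Vert u(t_n) - u^n \Vert_r = \big\Vert \mathrm{e}^{\p i t_n \partial_x^2}\big(v(t_n) - v^n\big)\big\Vert_r = \Vert v(t_n) - v^n \Vert_r .
\end{equation*}
Finally I would invoke Theorem~\ref{thm1}, whose hypothesis $v(t) \in H^r$ for $0 \le t \le T$ is equivalent, by the same isometry, to the assumed control of $u(t)$, and whose constant depends on $\sup_{0 \le t \le T}\Vert v(t)\Vert_r = \sup_{0 \le t \le T}\Vert u(t)\Vert_r$. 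This yields $\Vert v(t_n) - v^n\Vert_r \le c\tau$ and hence the claimed bound $\Vert u(t_n) - u^n\Vert_r \le c\tau$.

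I do not expect any genuine obstacle here: all the analytic content has already been absorbed into Theorem~\ref{thm1}, namely the second-order local error estimate of Lemma~\ref{lem:loc1} (which crucially uses that the twisted difference $v(t_k+s)-v(t_k)$ is $\mathcal{O}(s)$ in $H^r$ with no loss of derivatives, via the key-relation \eqref{eq:keyd}) together with the stability estimate of Lemma~\ref{lem:stab1}. The only points requiring a moment's care are the consistency of the two iterations under twisting and the observation that the regularity hypotheses on $u$ and on $v$ coincide; both follow immediately from the isometry property and, in particular, require no additional smoothness of the solution.
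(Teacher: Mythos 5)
Your proposal is correct and is essentially identical to the paper's own proof: the paper likewise deduces Corollary~\ref{coru1} directly from Theorem~\ref{thm1} using that $\mathrm{e}^{i t \partial_x^2}$ is a linear isometry on $H^r$, so that $\Vert u(t_n)-u^n\Vert_r = \Vert v(t_n)-v^n\Vert_r$. The additional remarks you make about the consistency of the twisted and untwisted iterations and the equivalence of the regularity hypotheses are correct and merely make explicit what the paper leaves implicit.
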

\begin{proof}
The bound follows directly from Theorem~\ref{thm1} as $\mathrm{e}^{i t \partial_x^2}$ is a linear isometry on $H^r$ for all $t \in \mathbb{R}$.
\end{proof}
Similarly to above we obtain the following first-order convergence result for the exponential-type integrator \eqref{scheme2u} approximating the solution of the quadratic Schr\"odinger equation \eqref{eq:qnls2}.
\begin{corollary}\label{cor2}
Let $r>1/2$. Assume that the exact solution of \eqref{eq:qnls2} satisfies $v(t) \in H^{r}$ for $0 \leq t \leq T$. Then, there exists a  constant   $\tau_0>0$ such that for all  step sizes   $\tau \leq \tau_0$ and $t_n \leq T$ we have that the global error of \eqref{scheme2u} is bounded by
\begin{equation*}
\Vert u(t_{n}) - u^{n} \Vert_r \leq c \tau,
\end{equation*}
where $c$ depends on $\sup_{0\leq t \leq T} \Vert u(t)\Vert_r$.
\end{corollary}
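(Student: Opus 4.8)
The plan is to mirror, step for step, the analysis already carried out for the type-\eqref{eq:qnls1} equation. I would introduce the numerical flow $\Phi_t^\tau$ of the twisted variable read off from the approximation \eqref{eq:vdnls2}, so that $v^{n+1} = \Phi_{t_n}^\tau(v^n)$ for the scheme underlying \eqref{scheme2u}. I then establish a local-error lemma and a stability lemma for $\Phi_t^\tau$ in complete analogy with Lemma~\ref{lem:loc1} and Lemma~\ref{lem:stab1}, combine them via a Lady Windermere's fan argument exactly as in Theorem~\ref{thm1} to obtain $\Vert v(t_n)-v^n\Vert_r\le c\tau$, and finally transfer the bound to the original variable using that $\mathrm{e}^{it\partial_x^2}$ is a linear isometry on $H^r$, just as in Corollary~\ref{coru1}. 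The decisive structural point, which I would emphasize, is that the oscillatory integral $I^\tau(w,t_n)$ is integrated \emph{exactly} by means of the key-relation \eqref{eq:keyd2}; hence, unlike the general case treated in Section~\ref{sec:p}, there is \emph{no} remainder term $\mathcal{R}^\tau$ and therefore no loss of derivatives.

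For the local error I would take the difference between Duhamel's formula for the twisted equation and the approximation \eqref{eq:vdnls2}, whose only discrepancy is the freezing $v(t_k+s)\approx v(t_k)$ inside the nonlinearity. Applying the bilinear estimate \eqref{bil} and factoring the quadratic difference as a product of a sum and a difference, the estimate reduces to controlling $\Vert v(t_k+s)-v(t_k)\Vert_r$. By the analogue of \eqref{locapp} for the quadratic nonlinearity this difference is of order $s\,\sup_{0\le\xi\le s}\Vert v(t_k+\xi)\Vert_r^{2}$ \emph{without} any gain of regularity being required, so integrating in $s$ over $[0,\tau]$ produces a local error of order $\tau^2$ with a constant depending only on $\sup_{0\le t\le\tau}\Vert\phi^t(v(t_k))\Vert_r$. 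This is the precise point at which the exact treatment of the stiff phase pays off.

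For stability I would simply read the representation of $\Phi_t^\tau$ off \eqref{scheme2u}: since $H^r$ is an algebra for $r>1/2$, the regularized operator $\partial_x^{-1}$ is bounded (indeed smoothing) on $H^r$, the propagator $\mathrm{e}^{it\partial_x^2}$ is an isometry, and the zero-mode quantities $\overline{\hat w}_0$ and $\Vert w\Vert_{L^2}^2$ are controlled by $\Vert w\Vert_r$; the Lipschitz bound $\Vert\Phi_t^\tau(f)-\Phi_t^\tau(g)\Vert_r\le(1+\tau|\mu|L)\Vert f-g\Vert_r$ then follows with $L$ depending on $\Vert f\Vert_r$ and $\Vert g\Vert_r$. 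I expect the only genuinely delicate bookkeeping, and thus the main obstacle, to lie in the separately handled resonant modes $k_2=0$ and $k_1=k_2$ together with the factors $\partial_x^{-1}$ introduced by the exact integration: one must verify that these zero-mode contributions and the regularized inverse derivative are harmless in both the local-error and the stability step. Once this is confirmed, the global estimate for $v$ and its transfer to $u$ through the isometry conclude the proof verbatim as in Theorem~\ref{thm1} and Corollary~\ref{coru1}.
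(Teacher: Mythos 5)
Your proposal is correct and follows exactly the route the paper intends: the paper itself omits the proof of this corollary, stating only that it is analogous to Corollary~\ref{coru1}, i.e., local error and stability lemmas for the twisted flow as in Lemmata~\ref{lem:loc1} and~\ref{lem:stab1}, a Lady Windermere's fan argument as in Theorem~\ref{thm1}, and transfer to $u$ via the isometry $\mathrm{e}^{it\partial_x^2}$. Your additional observations --- that the exact integration via \eqref{eq:keyd2} leaves no remainder and no loss of derivatives, and that the zero modes and the regularized $\partial_x^{-1}$ are harmless --- are precisely the points one must check, and they check out.
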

\begin{proof}
The  proof is similar to that of  Corollary \ref{coru1} and will therefore be omitted here.
\end{proof}

\section{Numerical experiments}\label{sec:num}
In this section we compare the newly developed exponential-type integrator \eqref{schemePu} applied to Schr\"odinger equations with low regularity solutions in the energy space with the classical Lie splitting, the classical Strang splitting and the classical first-order exponential integrator. In the numerical experiments we use a standard Fourier pseudospectral method for the space discretization where we choose the largest Fourier mode $K = 2^{10}$ (i.e., the spatial mesh size $\Delta x =  0.0061$). The numerical experiments show that the new integrator \eqref{schemePu} is preferable over the classical splitting and exponential integration methods for \emph{low regularity solutions}. In particular, the experiments indicate that the proposed integrator \eqref{schemePu} is convergent of order one even for rough solutions whereas the classical methods suffer from order reduction.

\blue Although the new integrator \eqref{schemePu} was derived for nonlinearities with integer $p\ge 1$, it can be applied to problem \eqref{nls} with non-integer $p$ as well. Preliminary numerical results for small non-integer values of $p$ are presented in Section~\ref{sec:numP}.\black

\subsection{Numerical experiments for  Schr\"odinger equations of type (\ref{nls})}
In this section we consider nonlinear Schr\"odinger equations of type \eqref{nls}.  The associated Lie splitting (of \emph{classical order one}) reads
\begin{equation}\label{lieP}
\begin{aligned}
& u^{n+1/2}_L = \mathrm{e}^{\p i \tau \Delta} u^n_L,\\
& u^{n+1}_L=  \mathrm{e}^{-i \tau\mu\big \vert u_L^{n+1/2}\big\vert^{2p}}   u_L^{n+1/2},
\end{aligned}
\end{equation}
and the associated Strang splitting (of \emph{classical order two}) is given by
\begin{equation}\label{strangP}
\begin{aligned}
& u^{n+1/2}_{-} = \mathrm{e}^{\p i \frac{\tau}{2} \Delta} u^n_S,\\
& u^{n+1/2}_{+} =  \mathrm{e}^{-i \tau \mu\big\vert u^{n+1/2}_{-}\big\vert^{2p}}   u^{n+1/2}_{-},
\\
& u^{n+1}_S = \mathrm{e}^{\p i \frac{\tau}{2} \Delta}u^{n+1/2}_{+}.
\end{aligned}
\end{equation}
Furthermore, the classical first-order exponential integrator (of \emph{classical order one})  is defined through
\begin{equation}\label{expiP}
u^{n+1}_E = \mathrm{e}^{\p i \tau \Delta} u^n_E - i  \mu \tau  \varphi_1(\p i \tau \Delta) \Big(\vert u_E^n\vert^{2p} u_E^n\Big).
\end{equation}
In our numerical experiments we  choose $\mu=1$ (unless otherwise noted) and   set 
\begin{equation}\label{in:rand}
  \mathcal{U}^K:= [u(0,x_1), u(0,x_2),\ldots,u(0,x_{2K})] =  \mathrm{rand}(2K,1)+i\, \mathrm{rand}(2K,1) \in \mathbb{C}^{2K},
\end{equation}
where   $x_j = (-1+ \frac{j}{K})\pi$ and   $\mathrm{rand}(2K,1)$ returns $2K$ uniformly distributed random numbers between $0$ and $1$. We compare the above schemes with the newly derived exponential-type integrator \eqref{schemePu} for initial values
\begin{equation}\label{inv}
u^K_\vartheta = \vert \partial_{x,K}\vert ^{-\vartheta} \mathcal{U}^K, \quad\big (\vert \partial_{x,K}\vert^{-\vartheta}\big)_k :=
\left\{
\begin{array}{ll}
|k|^{-\vartheta} &\mbox{if} \quad k \neq 0,\\
0 & \mbox{if}\quad k = 0
\end{array}
\right.
\end{equation}
for different values of $\vartheta$ normalized in $L^2$.   (For typical initial values, see Figure \ref{fig:ini}.)  All experiments are carried out with constant step sizes $\tau = \frac{j}{512}$ for $j=1,\ldots, 512$.

\begin{figure}[h!]
\centering
\includegraphics[width=0.43\linewidth]{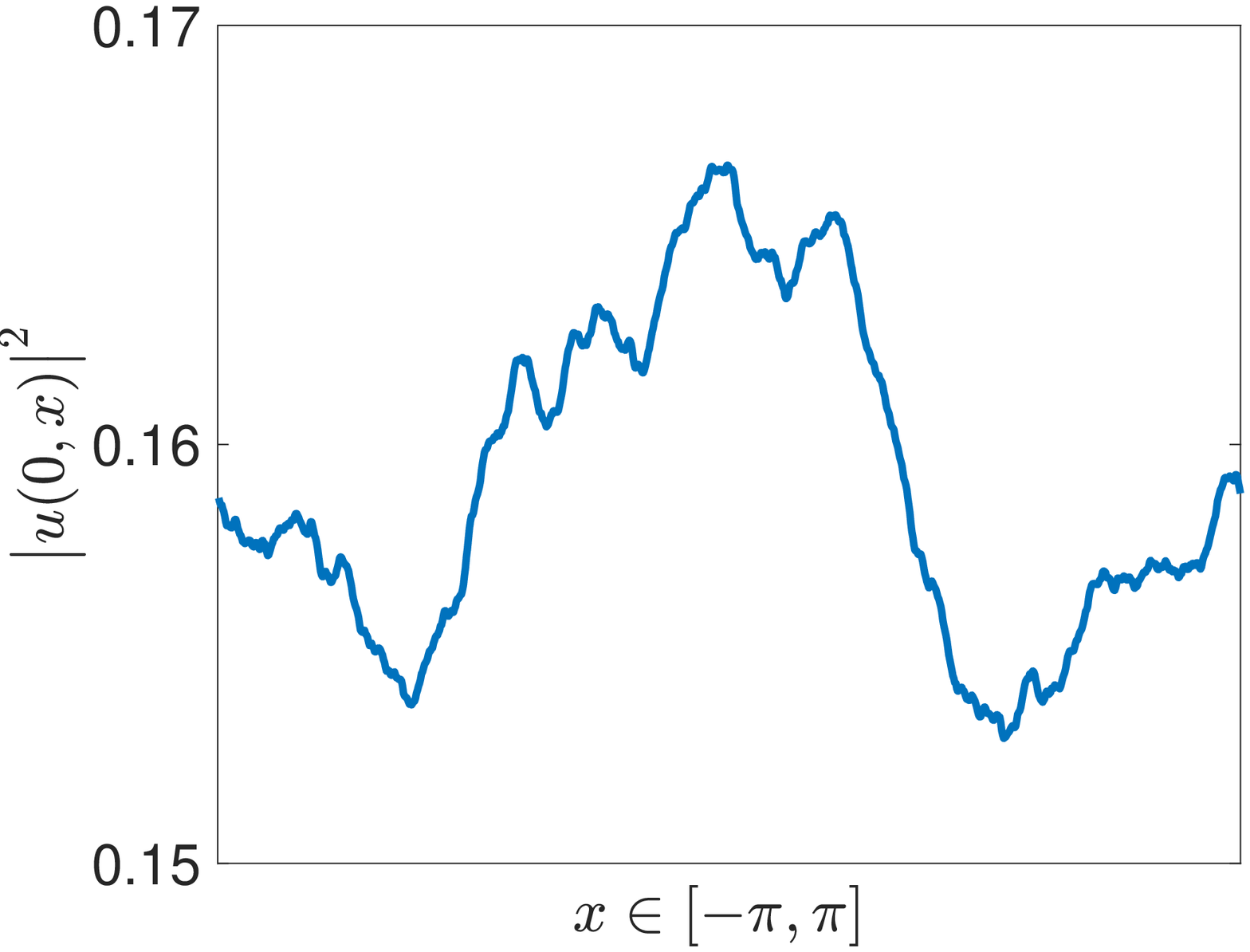}
\hskip1cm
\includegraphics[width=0.43\linewidth]{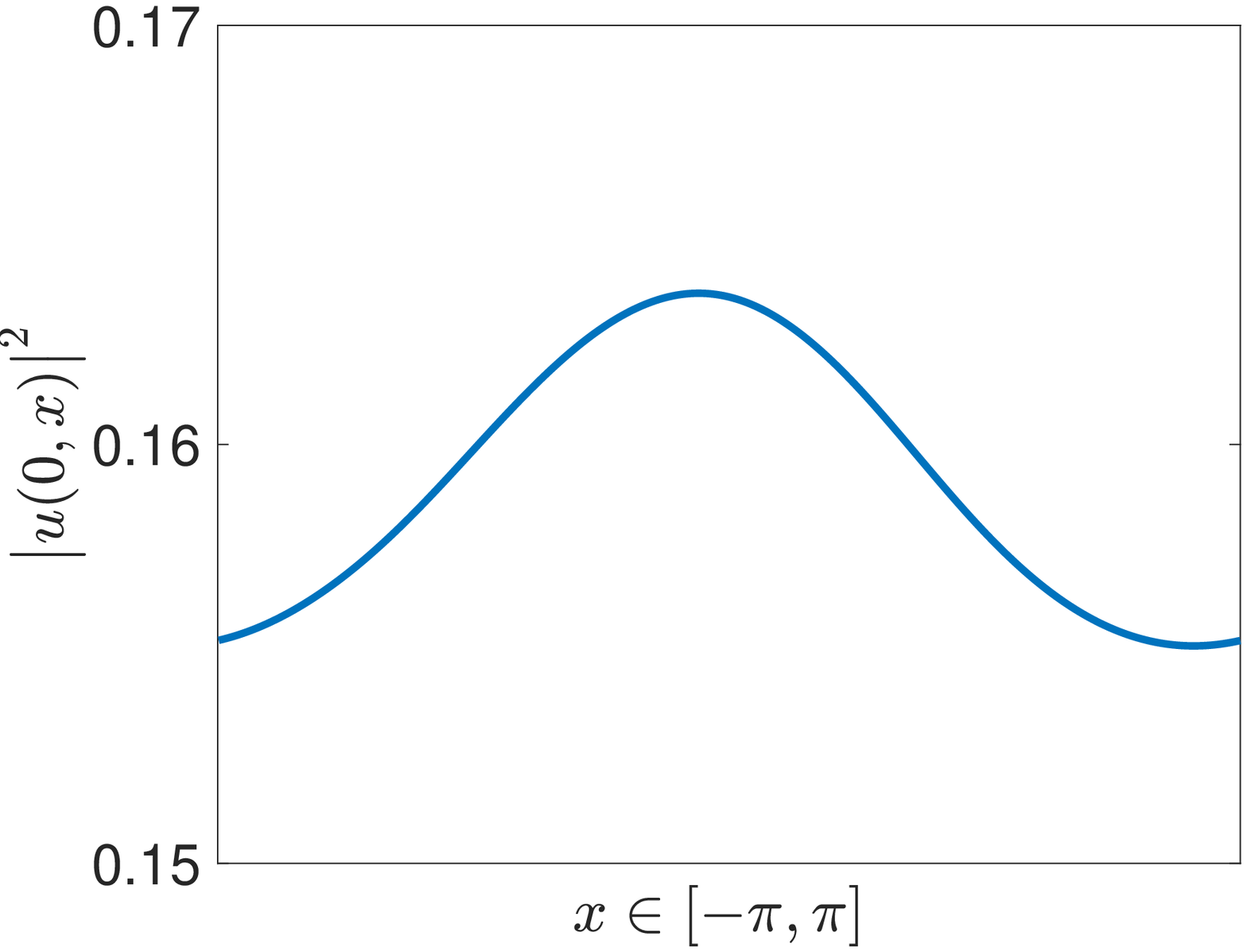}
\caption{Initial values \eqref{inv} normalized in $L^2$ for two different values of $\vartheta$. Left: $\vartheta = \frac{3}{2}$. Right: $\vartheta = 5$. }\label{fig:ini}
\end{figure}

\subsubsection{Cubic Schr\"odinger equations}
In this section we consider the classical cubic Schr\"odinger equation, i.e., $p = 1$ in \eqref{nls}. The error   at time $T=1$   measured in a discrete $H^1$ norm of the exponential-type integrator \eqref{schemePu}, the Lie splitting scheme \eqref{lieP}, the Strang splitting scheme \eqref{strangP} as well as the classical exponential integrator \eqref{expiP} for the initial value \eqref{inv} normalized in $L^2$ is illustrated in Figure \ref{fig:randCub2} for $\vartheta = 3/2$, respectively, $\vartheta = 2$ and in Figure \ref{fig:randCub4} for $\vartheta = 3$, respectively, $\vartheta = 5$.  As a reference solution we take the schemes themselves with a very small time step size if $\vartheta \neq 5$. For $\vartheta = 5$ we take the Strang splitting scheme \eqref{strangP} with a very small time step size as the reference solution for all schemes.
The experiments show that Strang splitting is second-order convergent for $\vartheta\ge 5$, and that Lie splitting is first-order convergent for $\vartheta\ge 3$. These observations are in line with the convergence proofs from the literature \cite{ESS16, Lubich08}, see also our discussion in the introduction. For smaller values of $\vartheta$, both Lie and Strang splitting show a zig-zag behaviour, depending on whether the local errors accumulate or happy error cancellation occurs. The classical exponential integrator of order one is first-order convergent for $\vartheta\ge 3$ as expected (see the discussion in the introduction). For smaller values of $\vartheta$ the convergence behaviour of the exponential integrator gets less regular and order reduction occurs. On the other hand, the new exponential-type integrator introduced in this paper is first-order convergent for all $\vartheta \ge 2$. This behaviour is in line with Theorem~\ref{thm1P}. The integrator even shows first-order convergence and small errors for $\vartheta=\frac32$.
\begin{figure}[h!]
\centering
\includegraphics[width=0.43\linewidth]{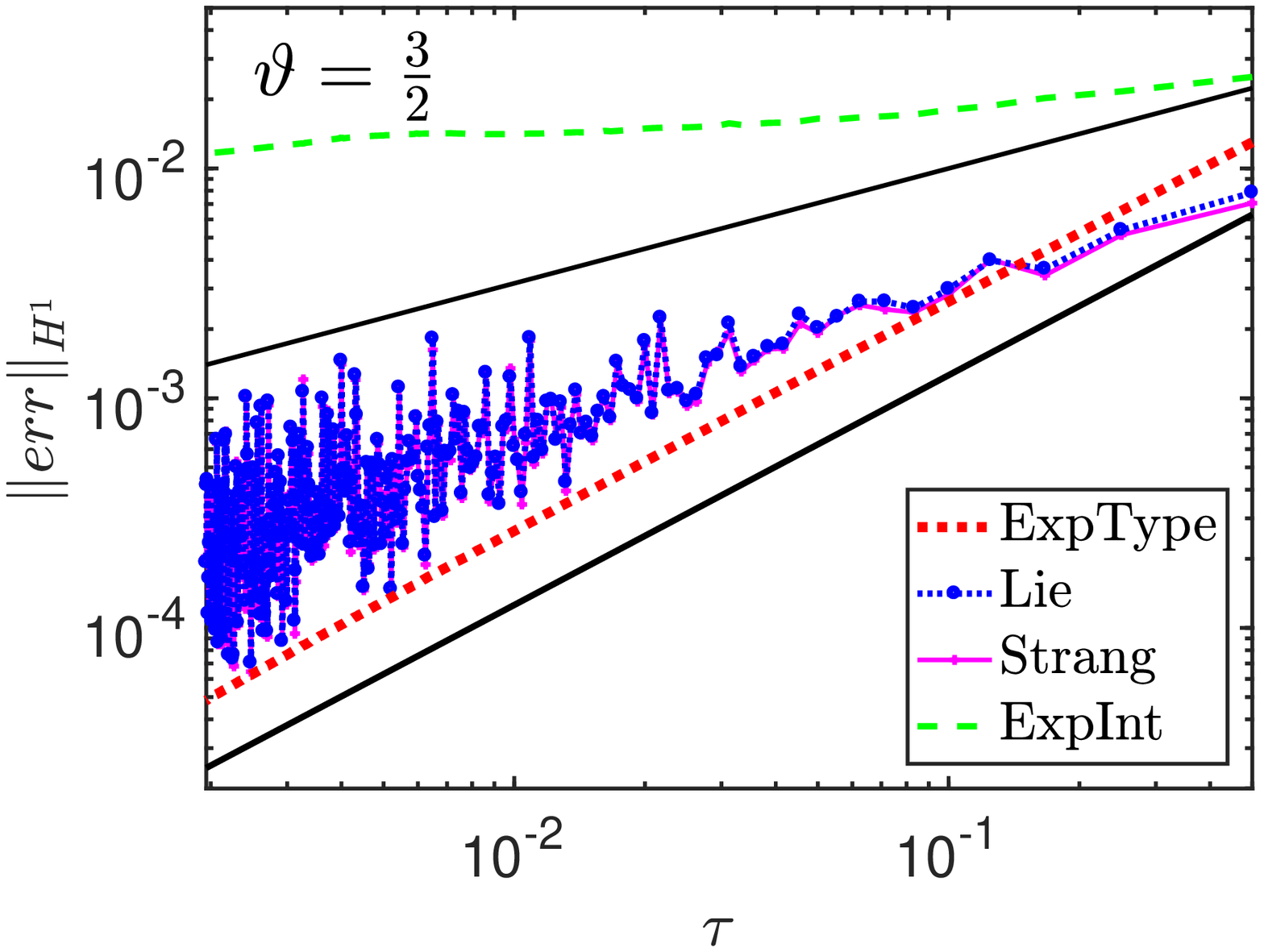}
\hfill
\includegraphics[width=0.43\linewidth]{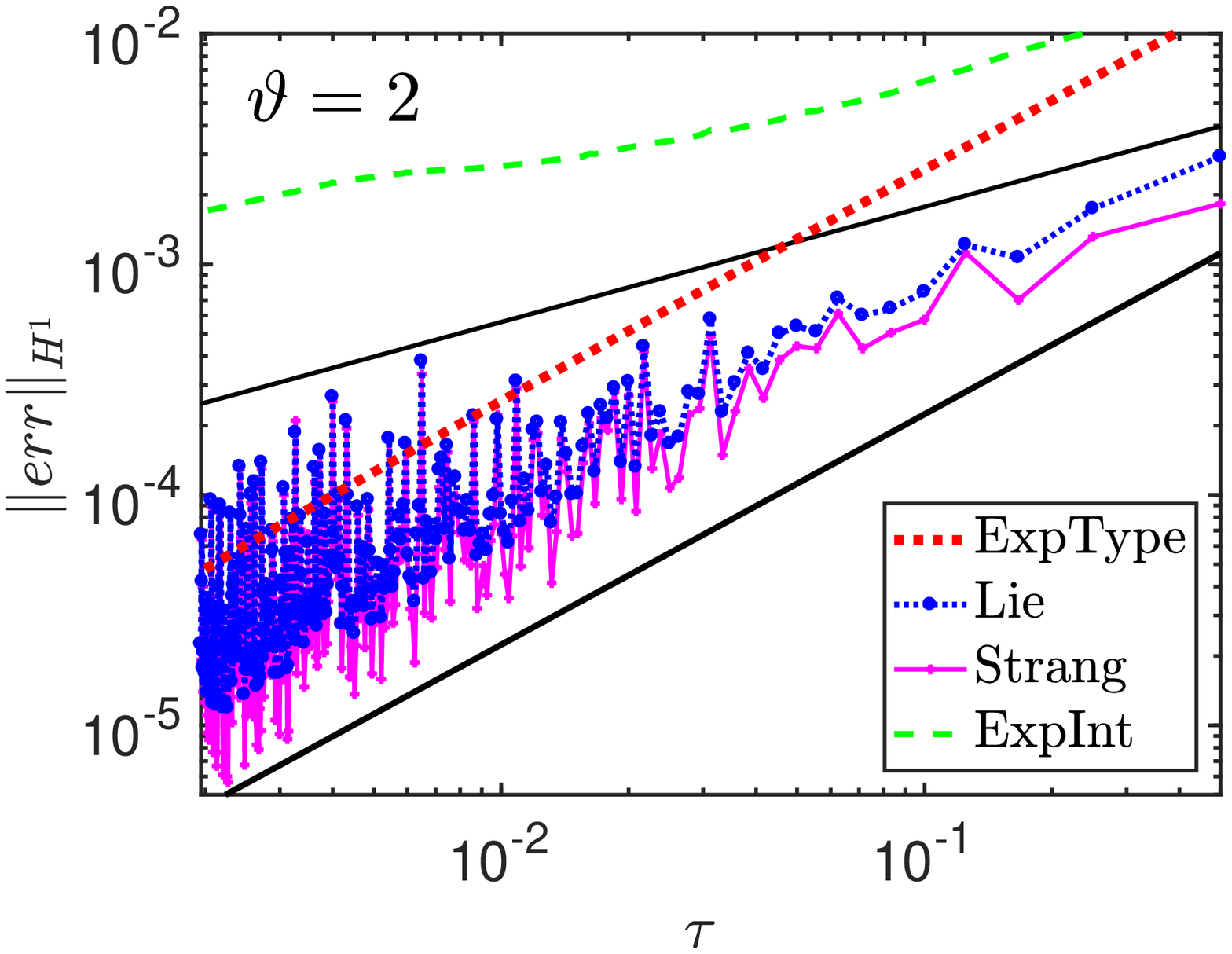}
\caption{Error in the energy space $H^1$ of the Lie splitting \eqref{lieP} (blue, dotted circle), Strang splitting \eqref{strangP} (magenta, star), classical exponential integrator \eqref{expiP}  (green, dashed) and exponential-type integration scheme \eqref{schemePu} (red, dash dotted). Left picture:  $H^{3/2}$ solutions. Right picture: $H^2$ solutions.  The slope of the continuous lines is one half and one, respectively.}\label{fig:randCub2}
\end{figure}

\begin{figure}[h!]
\centering
\includegraphics[width=0.43\linewidth]{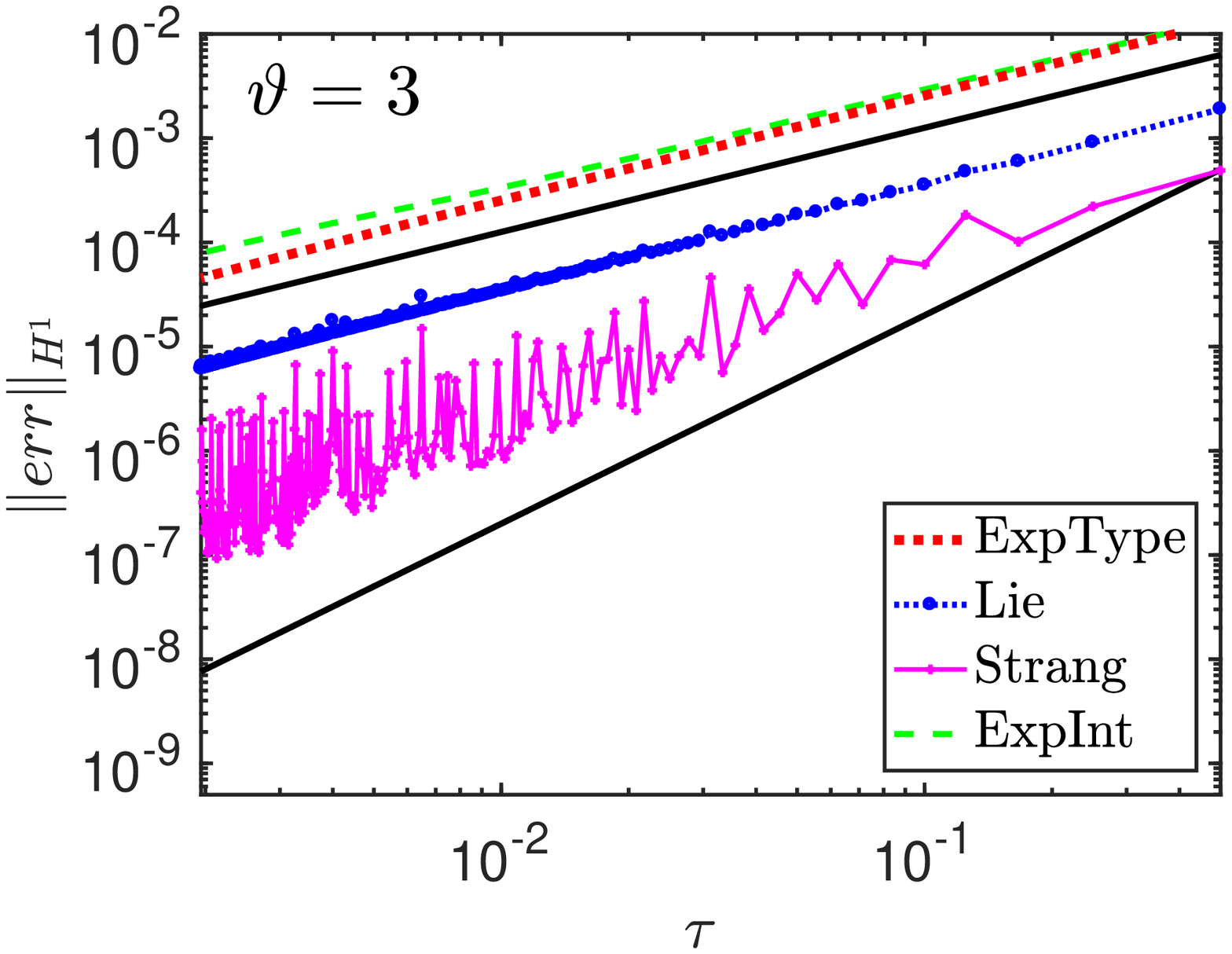}
\hfill
\includegraphics[width=0.43\linewidth]{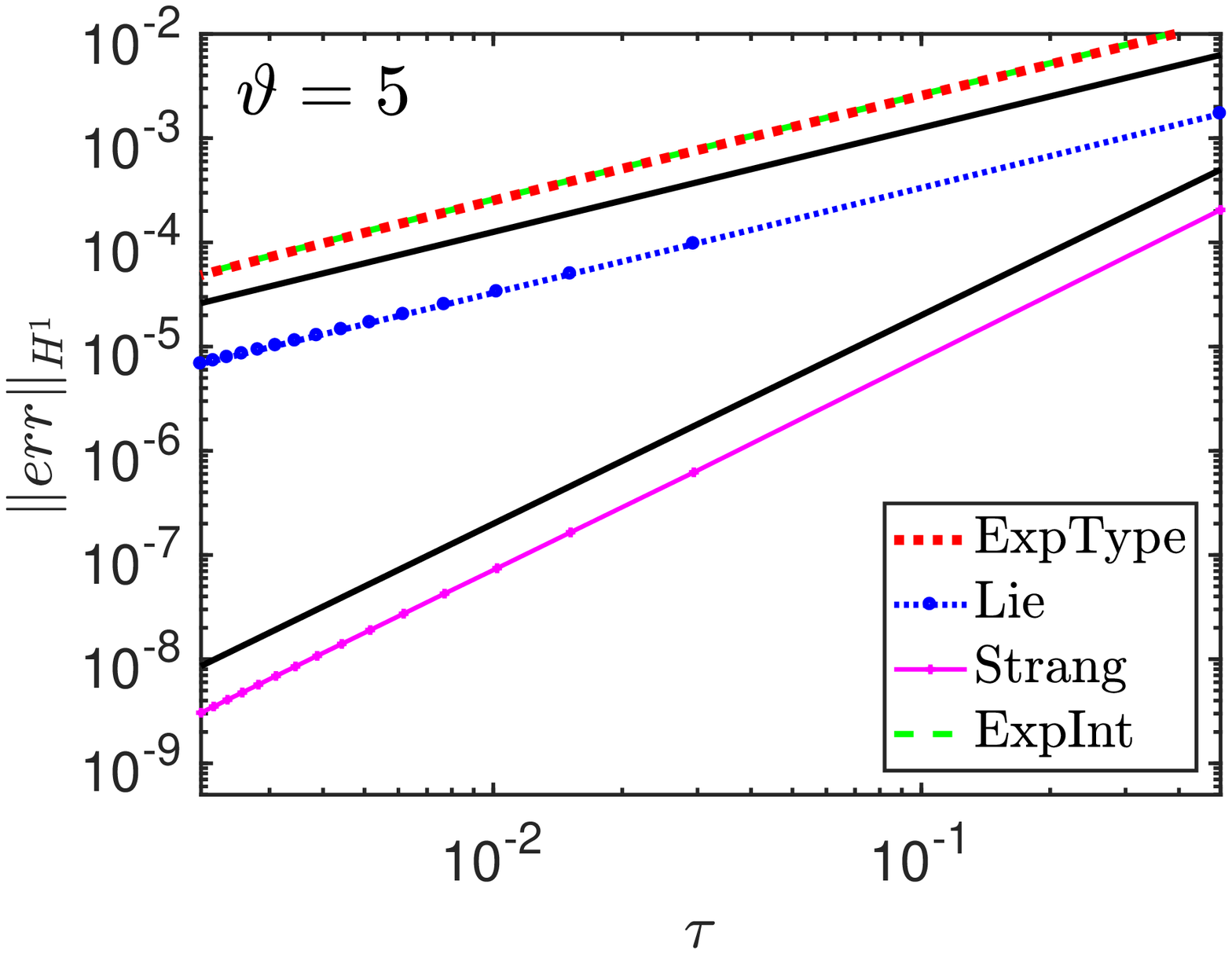}
\caption{Error in the energy space $H^1$ of the Lie splitting \eqref{lieP} (blue, dotted circle), Strang splitting \eqref{strangP} (magenta, star), classical exponential integrator \eqref{expiP}  (green, dashed) and exponential-type integration scheme \eqref{schemePu} (red, dash dotted).  Left picture:  $H^{3}$ solutions. Right picture: $H^5$ solutions. The slope of the continuous lines is one and two, respectively.}\label{fig:randCub4}
\end{figure}

\subsubsection{Quintic Schr\"odinger equations}

In this section we consider the quintic Schr\"odinger equation, i.e., $p = 2$ in \eqref{nls}, which appears for instance as the mean field limit of a Boson gas with three-body interactions, see \cite{CP11}. The error   at time $T=1$   measured in a discrete $H^1$ norm of the exponential-type integrator \eqref{schemePu}, the Lie splitting scheme \eqref{lieP}, the Strang splitting scheme \eqref{strangP} as well as the classical exponential integrator \eqref{expiP} for the initial value \eqref{inv}
normalized in $L^2$ is illustrated in Figure \ref{fig:randQuin1} for $\vartheta = 3/2$, respectively, $\vartheta = 2$ and in Figure \ref{fig:randQuin2} for $\vartheta = 3$, respectively, $\vartheta = 5$. As a reference solution we take the schemes themselves with a very small time step size if $\vartheta \neq 5$. For $\vartheta = 5$ we take the Strang splitting scheme \eqref{strangP} with a very small time step size as the reference solution for all schemes.  The outcome of these numerical experiments is almost the same as for the cubic nonlinear Schr\"odinger equation. The only notable difference is the exponential-type integrator for $\vartheta=\frac32$, where the convergence behaviour is now less regular. Note, however, that the errors are still much smaller compared to the other methods.

\begin{figure}[h!]
\centering
\includegraphics[width=0.43\linewidth]{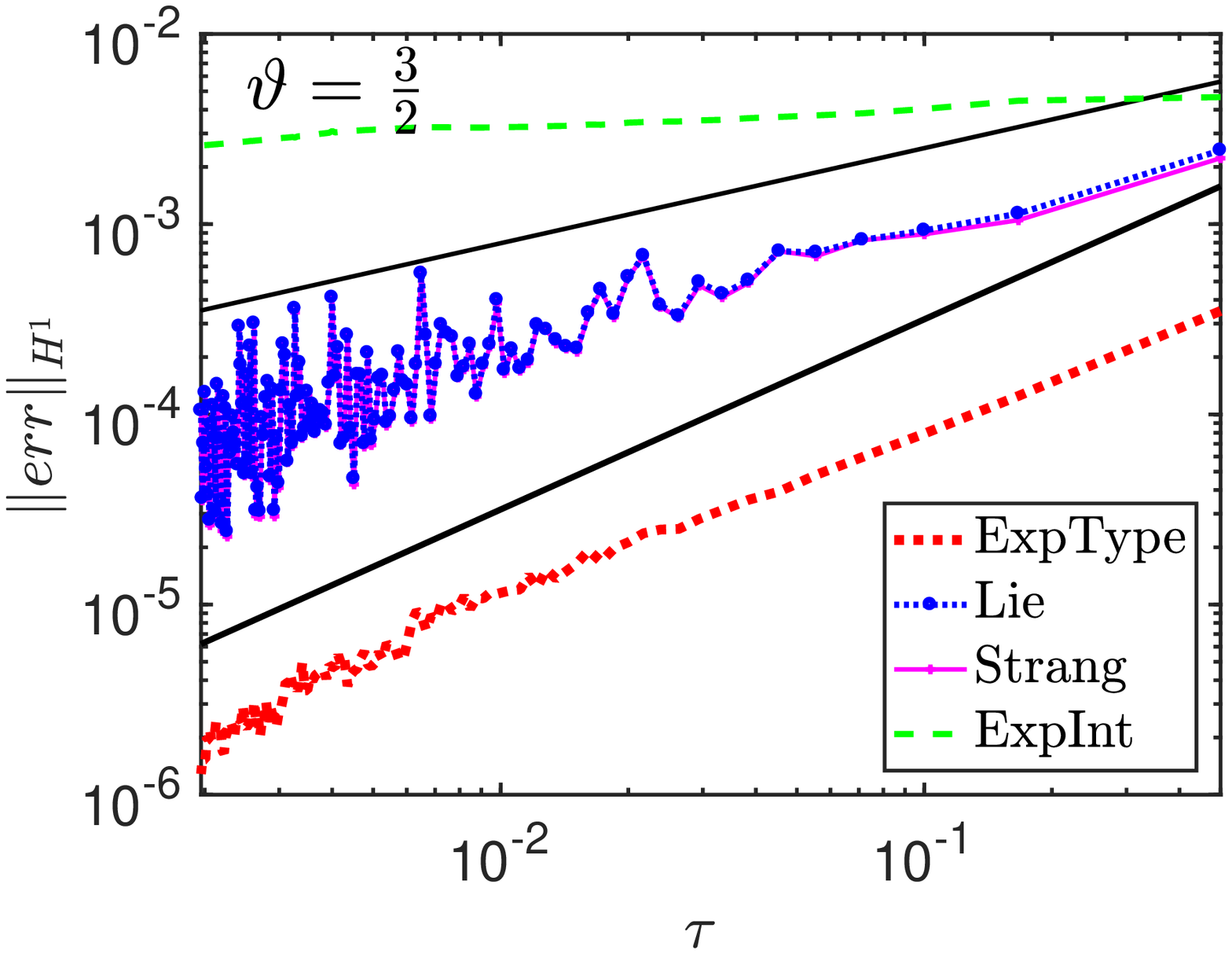}
\hfill
\includegraphics[width=0.43\linewidth]{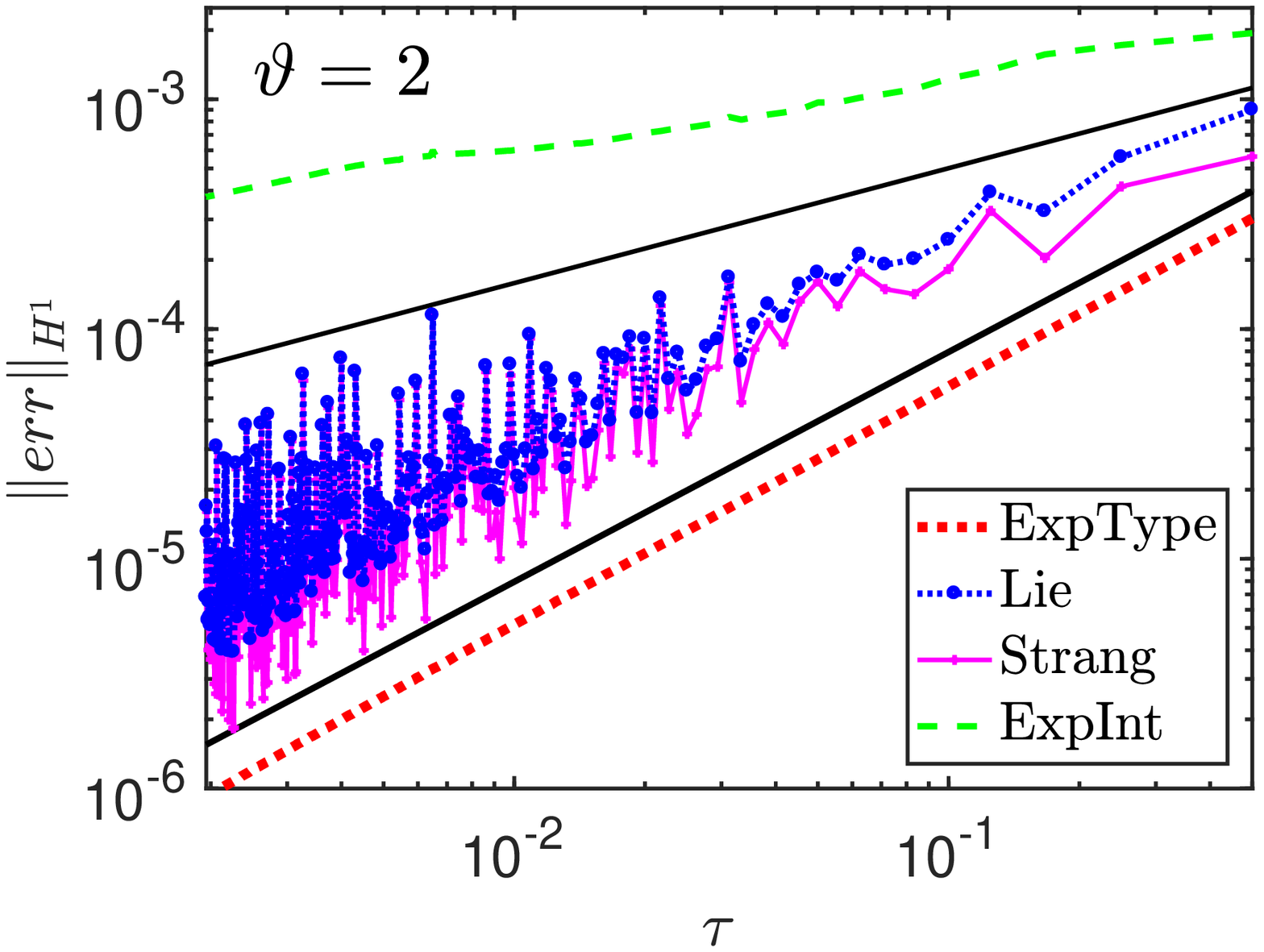}
\caption{Error in the energy space $H^1$ of the Lie splitting \eqref{lieP} (blue, dotted circle), Strang splitting \eqref{strangP} (magenta, star), classical exponential integrator \eqref{expiP}  (green, dashed) and exponential-type integration scheme \eqref{schemePu} (red, dash dotted). Left picture:  $H^{3/2}$ solutions. Right picture: $H^2$ solutions. Left picture: The slope of the continuous lines is one quarter and one, respectively. Right picture:  The slope of the continuous lines is one half and one, respectively. }\label{fig:randQuin1}
\end{figure}

\begin{figure}[h!]
\centering
\includegraphics[width=0.43\linewidth]{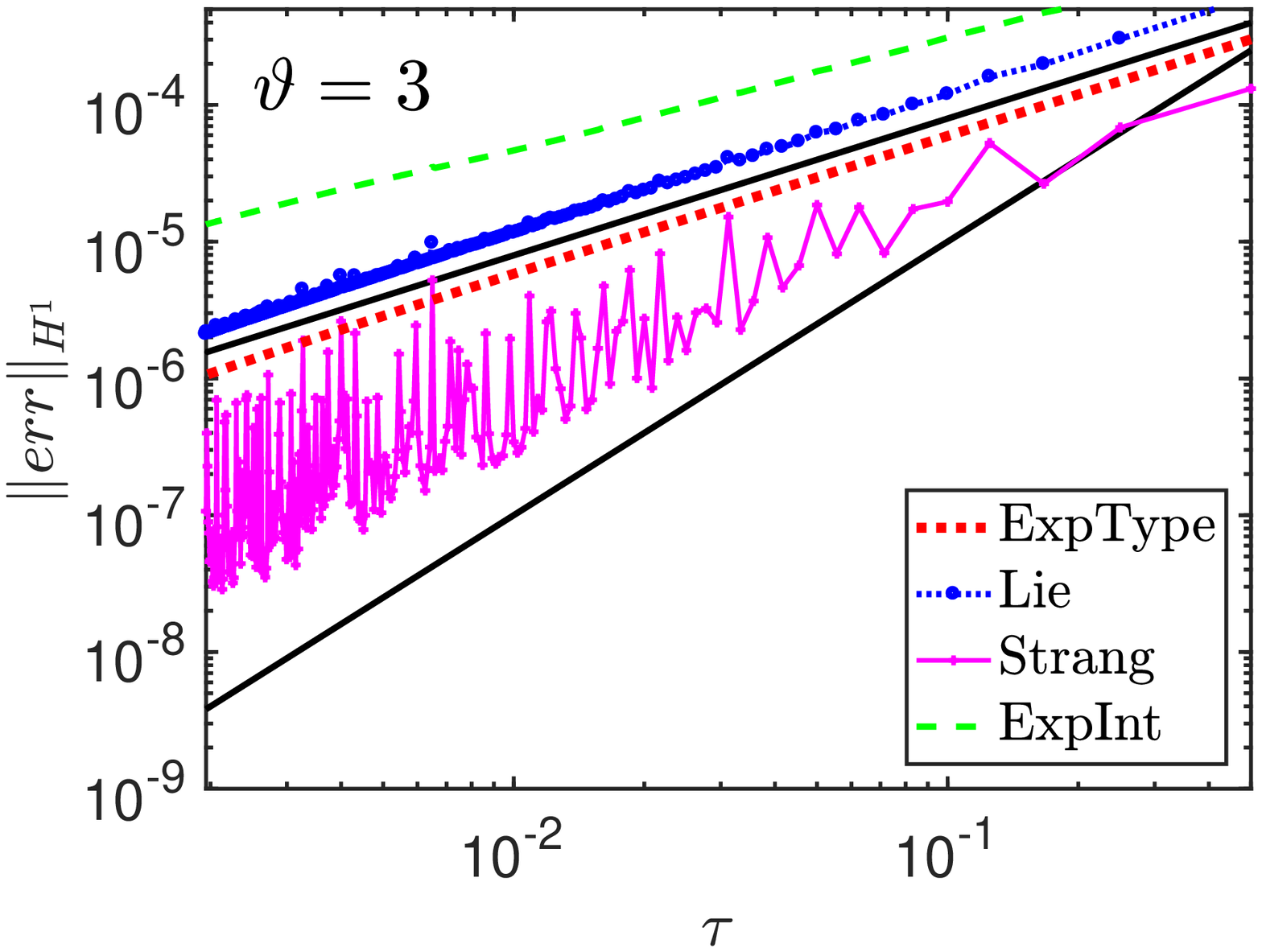}
\hfill
\includegraphics[width=0.43\linewidth]{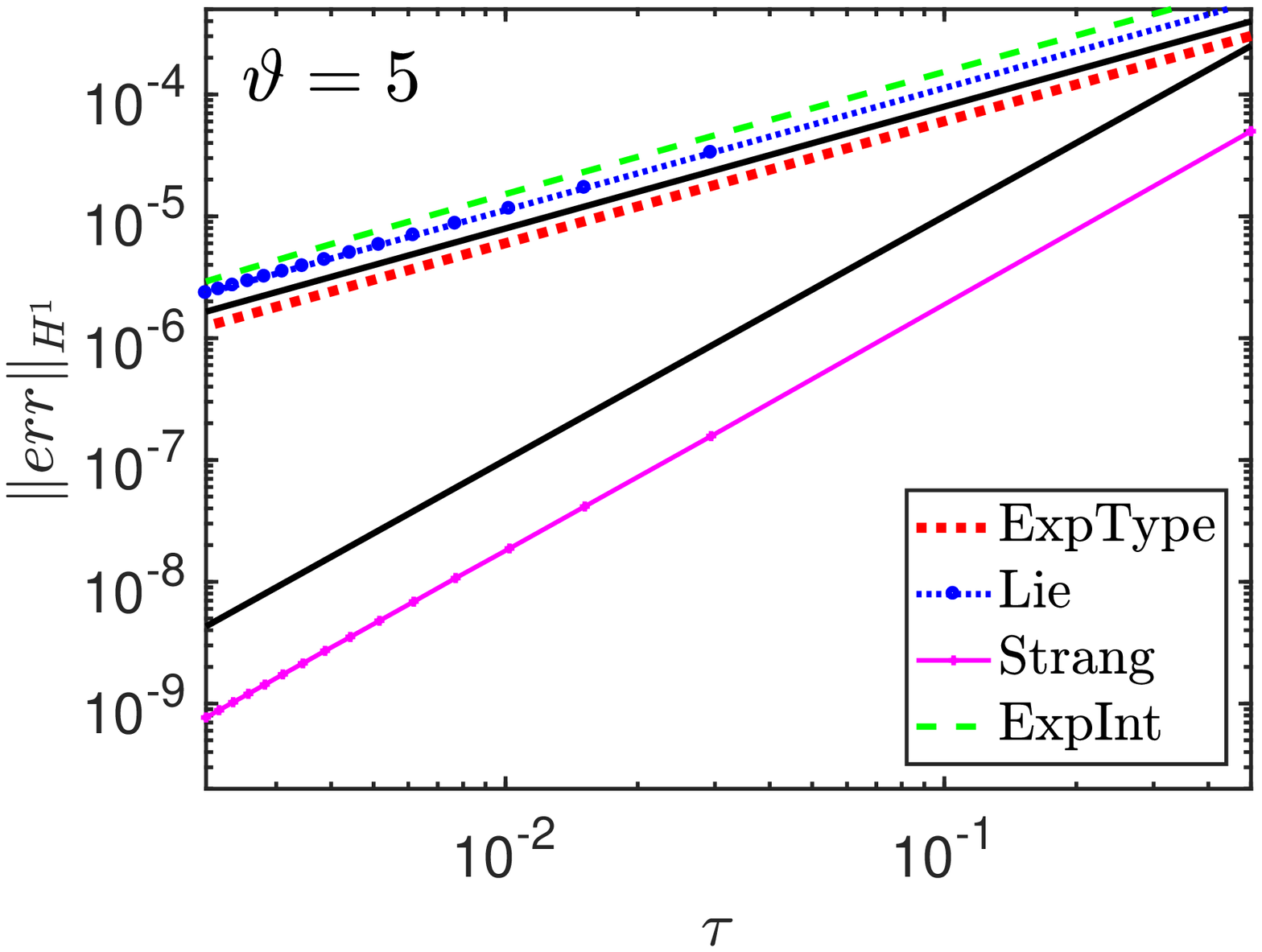}
\caption{Error in the energy space $H^1$ of the Lie splitting \eqref{lieP} (blue, dotted circle), Strang splitting \eqref{strangP} (magenta, star), classical exponential integrator \eqref{expiP}  (green, dashed) and exponential-type integration scheme \eqref{schemePu} (red, dash dotted). Left picture:  $H^{3}$ solutions. Right picture: $H^5$ solutions. The slope of the continuous lines is one and two, respectively. }\label{fig:randQuin2}
\end{figure}

\subsection{Numerical experiments for quadratic Schr\"odinger equations}

In this section we consider quadratic Schr\"odinger equations of type \eqref{eq:qnls1}. In order to derive the splitting methods, we split the right-hand side of \eqref{eq:qnls1} into the linear and nonlinear part, respectively. Note that (for sufficiently small $t$) the exact solution of the subproblem
$$
i u'(t) = \mu u(t)^2
$$
is given by
$$
u(t) = \frac{u(0)}{1+i \mu tu(0)}.
$$
Thus, the associated Lie splitting scheme (of \emph{classical order one}) reads
\begin{equation}\label{lie}
u_{L}^{n+1} = \mathrm{e}^{\p i \tau \partial_x^2}  \left(\frac{u_L^n}{1+i \mu \tau u_L^n}\right)
\end{equation}
and the associated Strang splitting scheme (of \emph{classical order two})  is given by
\begin{equation}\label{strang}
\begin{aligned}
u_{S}^{n+1/2} &= \mathrm{e}^{\p i \frac{\tau}{2} \partial_x^2}  u_S^n\\
u_{S}^{n+1} &= \mathrm{e}^{\p i \frac{\tau}{2} \partial_x^2}  \left(\frac{u_S^{n+1/2}}{1+i \mu \tau u_S^{n+1/2}}\right).
\end{aligned}
\end{equation}
In Examples \ref{ex:smooth} and \ref{ex:nsmooth}  we compare the above splitting methods as well as the classical first-order exponential integration scheme
\begin{equation}\label{expicl}
u^{n+1}_E = \mathrm{e}^{\p i \tau \partial_x^2} u^n_E - i  \mu \tau  \varphi_1(\p i \tau \partial_x^2) \big(u^n_E\big)^2
\end{equation}
with the newly derived exponential-type integrator \eqref{scheme1u} for \emph{smooth and non-smooth solutions}, respectively, where we set $\mu = 1$ and integrate up to $T = 1$. The numerical experiments in particular indicate that the exponential-type integrator \eqref{scheme1u} is preferable over the commonly used splitting methods for \emph{low regularity solutions}. This is due to the fact that within the exponential integration scheme all the \emph{stiff parts} (i.e., the terms involving the differential operator $\partial_x^2$) are solved exactly.

In Example \ref{ex:SNsmooth} we consider a quadratic Schr\"odinger equation of type \eqref{eq:qnls1} with a small nonlinearity on a longer time interval $T = 10$. The numerical experiment indicates the favorable behavior of the exponential integrator \eqref{scheme1u} even in the smooth case. This is due to the fact that its error constant is triggered by the nonlinearity (and not by the differential operator $\partial_x^2$).

\begin{example}[Smooth solutions]\label{ex:smooth}
We choose the smooth initial value
\begin{equation}\label{in:smooth}
u(0,x) = \sin x \cos x \in   \mathcal{C}^\infty(\mathbb{T})
\end{equation}
normalized in $L^2$. The error   at time $T=1$   measured in a discrete $L^2$ norm of the exponential-type integrator \eqref{scheme1u}, the Lie splitting scheme \eqref{lie}, the Strang splitting scheme \eqref{strang}, and   the classical exponential integrator   \eqref{expicl} is illustrated in Figure \ref{fig:smooth}. As a reference solution we take the Strang splitting scheme \eqref{strang} with a very small time step size.  All the methods show their classical orders of convergence, as expected.

\begin{figure}[h!]
\centering
\includegraphics[width=0.43\linewidth]{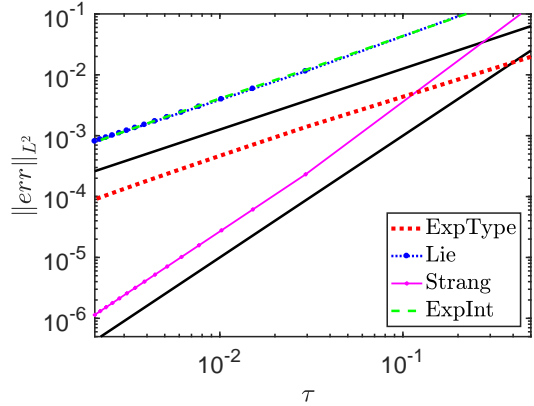}
\caption{Error of the Lie splitting \eqref{lie} (blue, dotted circle), Strang splitting \eqref{strang} (magenta, star), classical exponential integrator \eqref{expicl} (green, dashed) and exponential-type integration scheme \eqref{scheme1u} (red, dash dotted) for a \emph{smooth solution} with initial value \eqref{in:smooth} and $\mu=1$. The slope of continuous lines is one and two, respectively.}\label{fig:smooth}
\end{figure}
\end{example}

\begin{example}[Non-smooth solutions]\label{ex:nsmooth}
We   choose the non-smooth initial value \eqref{in:rand} normalized in $L^2$.   The error   at time $T=1$   measured in a discrete $L^2$ norm of the exponential-type integrator \eqref{scheme1u}, the Lie splitting scheme \eqref{lie}, the Strang splitting scheme \eqref{strang} and the classical exponential integrator \eqref{expicl} is illustrated in Figure \ref{fig:rand}. As a reference solution we take the schemes themselves with a very small time step size.  For this example, the classical exponential integrator is not convergent. Lie and Strang splitting suffer from a strong order reduction (down to order one half). Here, the new exponential-type integrator is by far the best method. It is first-order convergent, as predicted by Theorem \ref{thm1}.
\begin{figure}[h!]
\centering
\includegraphics[width=0.43\linewidth]{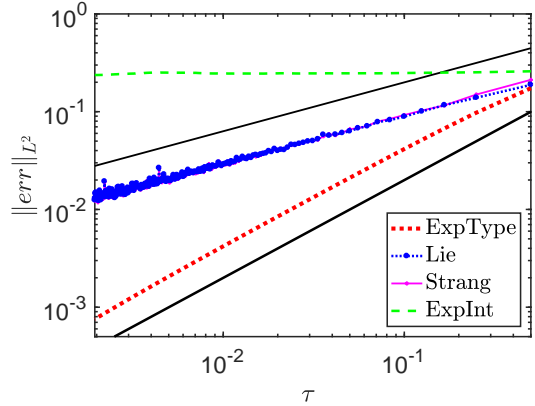}
\caption{Error of the Lie splitting \eqref{lie} (blue, circle), Strang splitting \eqref{strang} (magenta, star), classical exponential integrator \eqref{expicl} (green, diamond) and exponential-type integration scheme \eqref{scheme1u} (red, cross) for a \emph{non-smooth solution} with initial value \eqref{in:rand}  and $\mu=1$. The slope of continuous lines is one half and one, respectively.}\label{fig:rand}
\end{figure}
\end{example}

\begin{example}[Small nonlinearity]\label{ex:SNsmooth}
We consider the quadratic Schr\"odinger equation of type \eqref{eq:qnls1} with   small   nonlinearity, i.e.,
\begin{equation}\label{thelast}
i \partial_t u =\m \partial_x^2 u + \mu u^2 ,\quad \mu = 0.01, \quad T = 10.
\end{equation}
The error   at time $T=10$   measured in a discrete $L^2$ norm of the exponential-type integrator \eqref{scheme1u}, the Lie splitting scheme \eqref{lie}, the Strang splitting scheme \eqref{strang} and the classical exponential integrator  \eqref{expicl} for smooth and non-smooth solutions is illustrated in Figure \ref{fig:smallN} left and right, respectively. As a reference solution we take the Strang splitting scheme for smooth and the schemes themselves for non-smooth solutions with a very small time step size.
For smooth initial data, all methods show their classical orders of convergence. The new exponential-type integrator, however, is by far the most accurate scheme (for the considered range of step sizes). For non-smooth initial data, the classical exponential integrator does not converge; Lie and Strang splitting suffer again from a strong order reduction down to order one half. Only the new exponential-type integrator is very accurate and first-order convergent, as expected.
\begin{figure}[h!]
\centering
\includegraphics[width=0.43\linewidth]{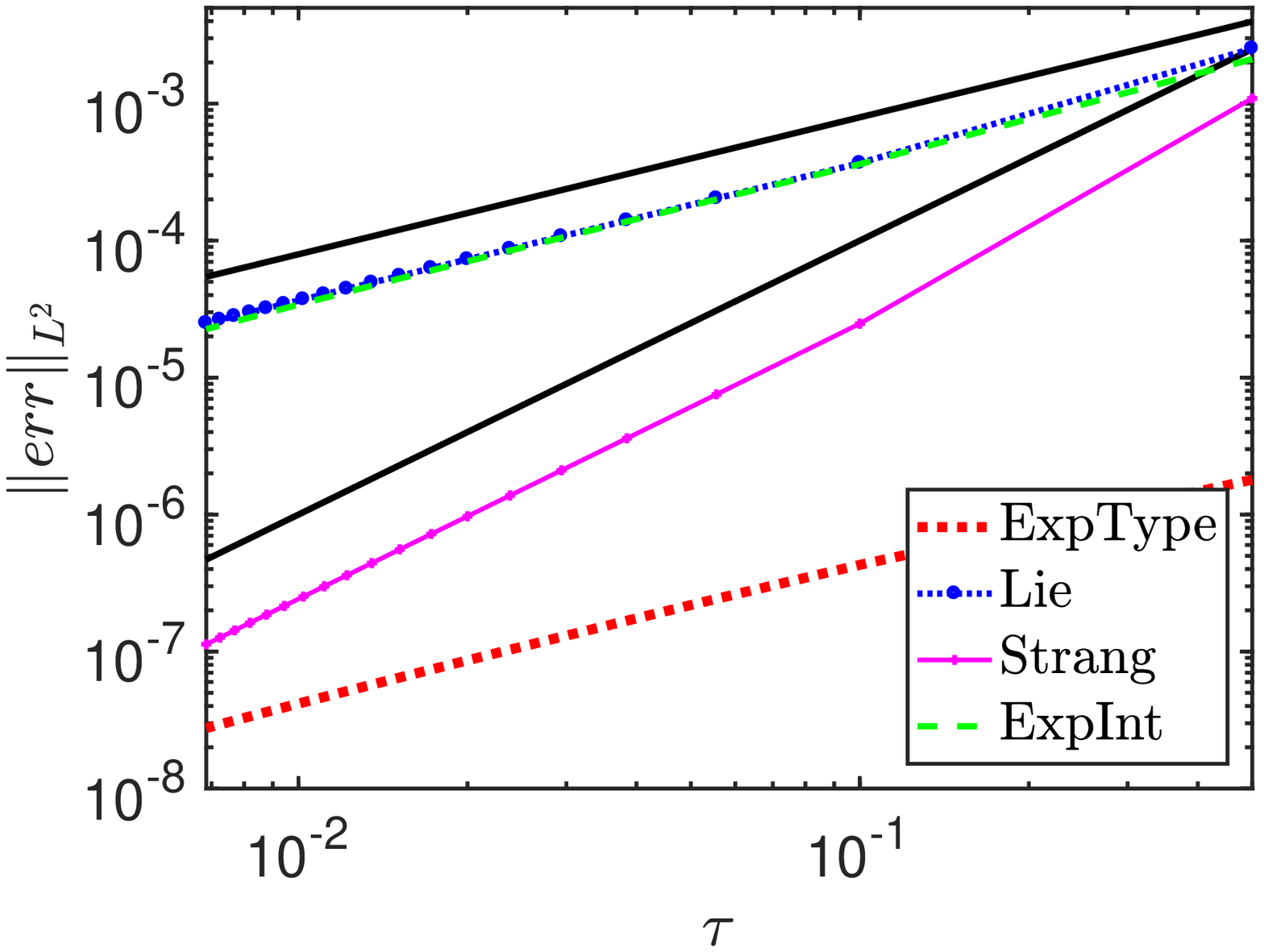}
\hfill
\includegraphics[width=0.43\linewidth]{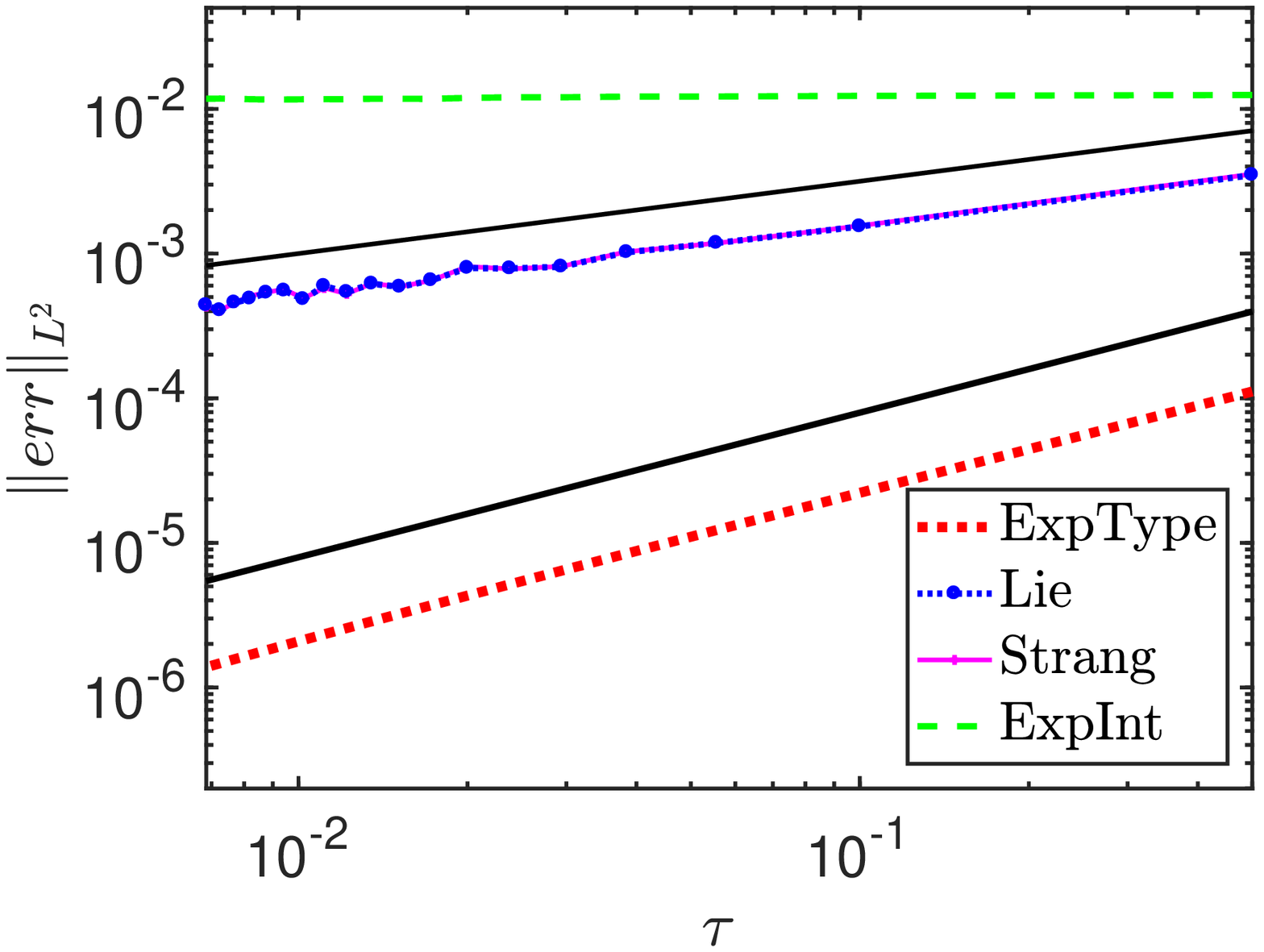}
\caption{Error of the Lie splitting \eqref{lie} (blue, dotted circle), Strang splitting \eqref{strang} (magenta, star), classical exponential integrator \eqref{expicl} (green, dashed) and exponential-type integration scheme \eqref{scheme1u} (red, dash dotted) for a \emph{small nonlinearity} \eqref{thelast}. Left picture: Smooth initial value \eqref{in:smooth}. The slope of the continuous lines is one and two, respectively.  Right picture: Non-smooth initial value \eqref{in:rand}. The slope of the continuous lines is one half, and one, respectively. }\label{fig:smallN}
\end{figure}
\end{example}

\subsection{Numerical experiments for semilinear Schr\"odinger equations with non-integer exponents $p$.}\label{sec:numP}

In this section we numerically investigate the convergence rates in the case of semilinear Schr\"odinger equations of type \eqref{nls} with non-integer exponents $p$. In Figure \ref{fig:pQuarter} we plot the error at time $T=1$ measured in a discrete $H^1$ norm  of the exponential-type integrator \eqref{schemePu}, the Lie splitting scheme \eqref{lieP}, the Strang splitting scheme \eqref{strangP} as well as the classical exponential integrator \eqref{expiP} for  different values of $p >0$. In all the simulations we take the smooth initial value
\begin{equation}\label{PiniD}
u(0,x) = \mathrm{sin}\, x \in \mathcal{C}^\infty(\mathbb{T})
\end{equation}
and $\mu=1$. Note, however, that the non-linearity is only smooth for $p> \frac{1}{2}$.

For $p=1$, all methods show their classical orders of convergence, as expected. For smaller values of $p$, however, the convergence behaviour of Strang and Lie splitting deteriorates. Surprisingly, both exponential integrators retain their first-order convergence down to very small values of $p$. This is the subject of future research.

\begin{figure}[h!]
\centering
\includegraphics[width=0.43\linewidth]{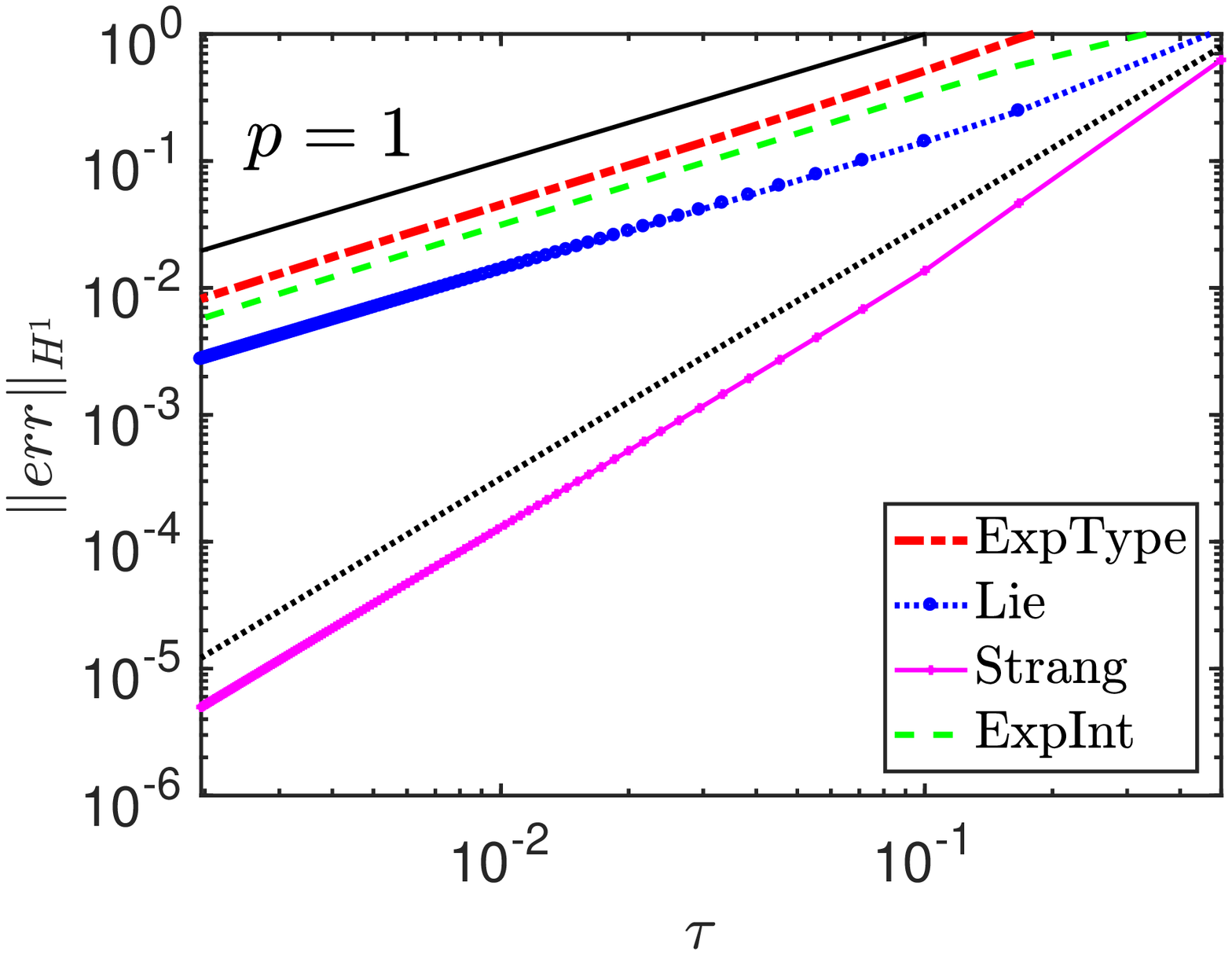}
\hfill
\includegraphics[width=0.43\linewidth]{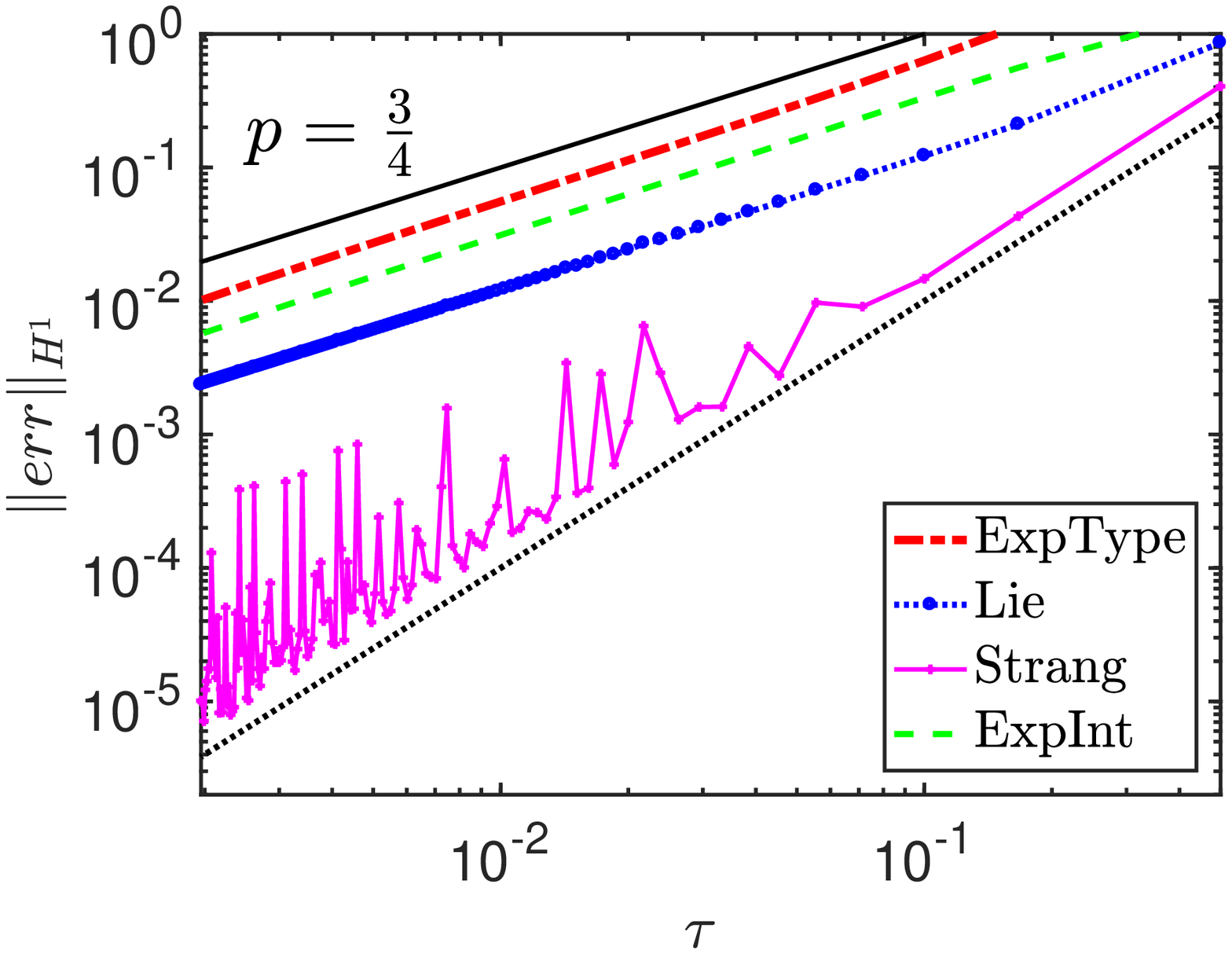}\\
\includegraphics[width=0.43\linewidth]{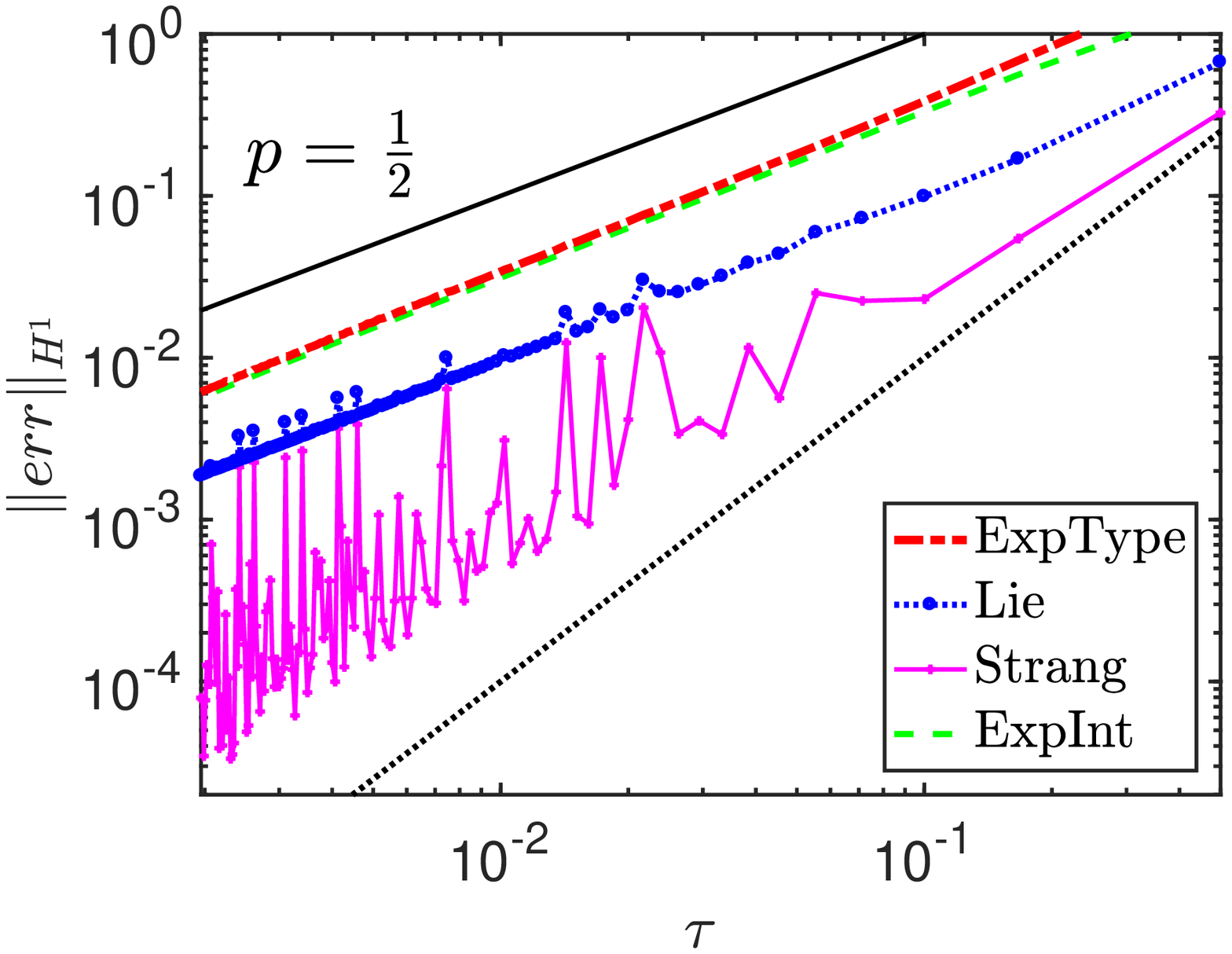}
\hfill
\includegraphics[width=0.43\linewidth]{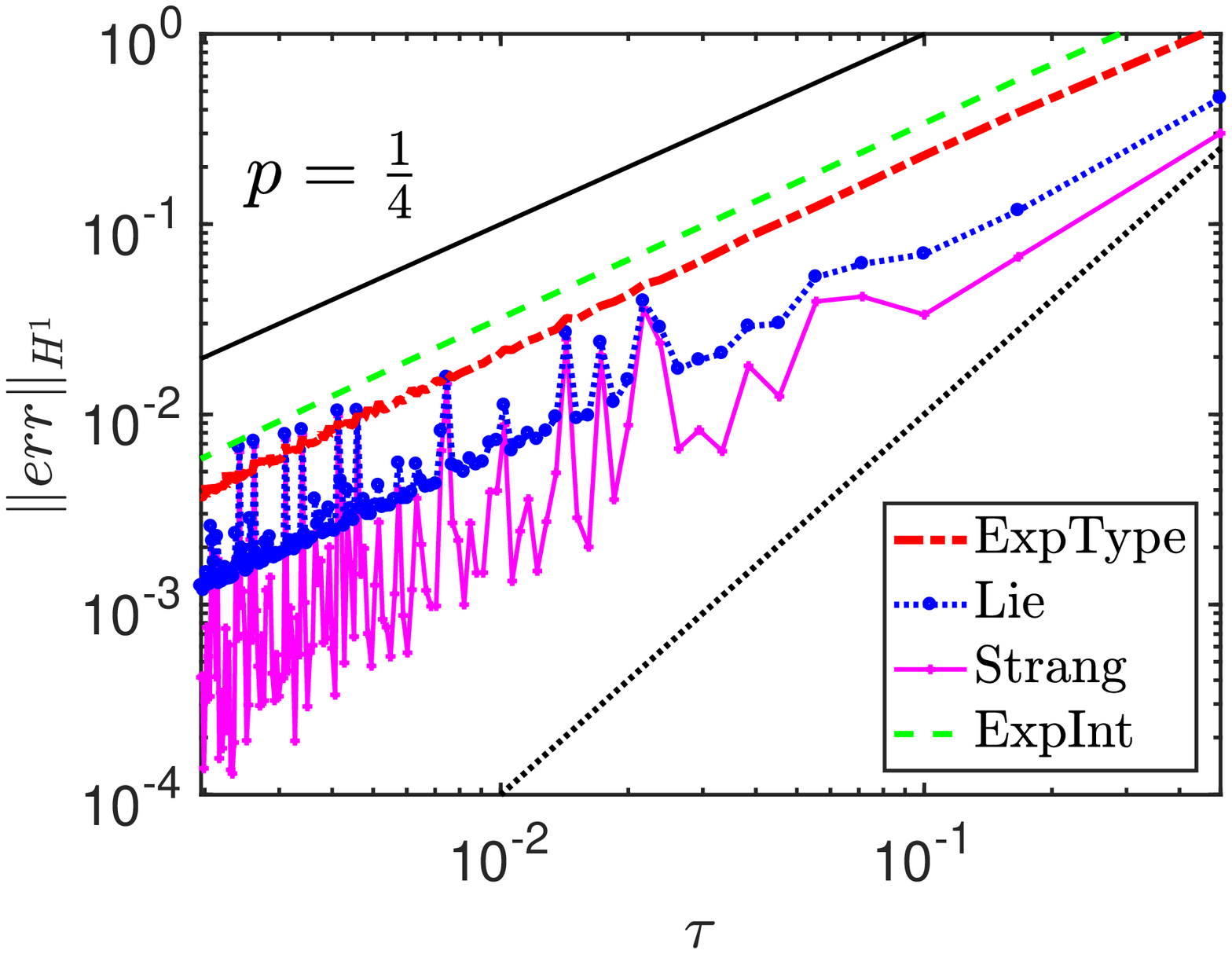}\\
\caption{Exponent $p = 1$ (upper row, left) and $p = \frac{3}{4}$ (upper row, right),  $p = \frac{1}{2}$ (lower row, left) and $p = \frac{1}{4}$ (lower row, right). Error in the energy space $H^1$ of the Lie splitting \eqref{lieP} (blue, dotted circle), Strang splitting \eqref{strangP} (magenta, star), classical exponential integrator \eqref{expiP}  (green, dashed) and exponential-type integration scheme \eqref{schemePu} (red, dash dotted) with smooth initial value \eqref{PiniD}. The slope of the  continuous and dotted line is  one and two, respectively.}\label{fig:pQuarter}
\end{figure}

\section*{Acknowledgement}
K. Schratz gratefully acknowledges financial support by the Deutsche Forschungsgemeinschaft (DFG) through CRC 1173.

\end{document}